\numberwithin{equation}{section}
\theoremstyle{plain}
\newtheorem{assumption}{Assumption}
\newtheorem{theorem}{Theorem}
\newtheorem{lemma}[theorem]{Lemma}
\newtheorem{proposition}[theorem]{Proposition}
\newtheorem{remark}[theorem]{Remark}
\newcommand{\Gammaset}{\mathbf{\Gamma}}
\newcommand{\Gammafunc}{\Gamma}
\begin{document}

\title[On the Hawkes Process with Different Exciting Functions]{On the Hawkes Process with Different Exciting Functions}

\author{Behzad Mehrdad}
\address
{Courant Institute of Mathematical Sciences\newline
\indent New York University\newline
\indent 251 Mercer Street\newline
\indent New York, NY-10012\newline
\indent United States of America}
\email{mehrdad@cims.nyu.edu}

\author{Lingjiong Zhu}
\address
{Department of Mathematics \newline
\indent Florida State University \newline
\indent 1017 Academic Way \newline
\indent Tallahassee, FL-32306 \newline
\indent United States of America}
\email{
zhu@math.fsu.edu}

\date{3 September 2017.}
\subjclass[2010]{60G55, 60F10.}
\keywords{point process, Hawkes process, self and mutually exciting process, large deviations, moderate deviations, 
convergence to equilibrium, microstructure noise, ruin probabilities.}

\begin{abstract}
The Hawkes process is a simple point process, whose intensity
function depends on the entire past history and is self-exciting and has the clustering property. 
The Hawkes process is in general non-Markovian. The linear Hawkes process has 
immigration-birth representation.
Based on that, Fierro et al. recently introduced a generalized linear Hawkes model 
with different exciting functions.
In this paper, we study the convergence to equilibrium, large deviation principle, 
and moderate deviation principle for this generalized model. This model
also has connections to the multivariate linear Hawkes process.
Some applications to finance are also discussed.
\end{abstract}

\maketitle

\tableofcontents

\section{Introduction}

\subsection{Hawkes Process}

Let $N$ be a simple point process on $\mathbb{R}$ and let $\mathcal{F}^{-\infty}_{t}:=\sigma(N(C),C\in\mathcal{B}(\mathbb{R}), C\subset(-\infty,t])$ be
an increasing family of $\sigma$-algebras. Any nonnegative $\mathcal{F}^{-\infty}_{t}$-progressively measurable process $\lambda_{t}$ with
\begin{equation}
\mathbb{E}\left[N(a,b]|\mathcal{F}^{-\infty}_{a}\right]=\mathbb{E}\left[\int_{a}^{b}\lambda_{s}ds\big|\mathcal{F}^{-\infty}_{a}\right]
\end{equation}
a.s. for all intervals $(a,b]$ is called an $\mathcal{F}^{-\infty}_{t}$-intensity of $N$. We use the notation $N_{t}:=N(0,t]$ to denote the number of
points in the interval $(0,t]$. 

A Hawkes process is a simple point process $N$ admitting an $\mathcal{F}^{-\infty}_{t}$-intensity
\begin{equation}
\lambda_{t}:=\lambda\left(\int_{-\infty}^{t}h(t-s)N(ds)\right),\label{dynamics}
\end{equation}
where $\lambda(\cdot):\mathbb{R}^{+}\rightarrow\mathbb{R}^{+}$ is locally integrable, left continuous, 
$h(\cdot):\mathbb{R}^{+}\rightarrow\mathbb{R}^{+}$ and
we always assume that $\Vert h\Vert_{L^{1}}=\int_{0}^{\infty}h(t)dt<\infty$. 
In \eqref{dynamics}, $\int_{-\infty}^{t}h(t-s)N(ds)$ stands for $\int_{(-\infty,t)}h(t-s)N(ds)=\sum_{\tau<t}h(t-\tau)$, where
$\tau$ are the occurrences of the points before time $t$.

In the literature, $h(\cdot)$ and $\lambda(\cdot)$ are usually referred to
as exciting function and rate function respectively.

When $\lambda(\cdot)$ is linear, the Hawkes process is said to be linear and it is named after Hawkes \cite{Hawkes}.
The linear Hawkes process can be studied via immigration-birth representation, see e.g. Hawkes and Oakes \cite{HawkesII}.
When $\lambda(\cdot)$ is nonlinear, the Hawkes process is said to be nonlinear and the nonlinear Hawkes process
was first introduced by Br\'{e}maud and Massouli\'{e} \cite{Bremaud}.

The law of large numbers and central limit theorems for linear Hawkes processes were studied
in e.g. Bacry et al. \cite{Bacry}, and the moderate deviations were studied in Zhu \cite{ZhuMDP}.
The central limit theorem for nonlinear Hawkes processes was obtained in Zhu \cite{ZhuCLT}.
Bordenave and Torrisi \cite{Bordenave} obtained the large deviations for linear Hawkes processes
and the large deviations for nonlinear Hawkes processes were studied in Zhu \cite{ZhuI} and Zhu \cite{ZhuII}.
The limit theorems of some generalizations of the classical Hawkes processes have been studied in e.g. Karabash and Zhu \cite{Karabash}
and Zhu \cite{ZhuCIR}.

The self-exciting and clutstering properties of the Hawkes process make it ideal
to characterize the correlations in some complex systems, including finance.
Bacry et al. \cite{Bacry}, Bacry et al. \cite{BacryII} studied microstructure noise and Epps effect using the Hawkes models.
Chavez-Demoulin et al. \cite{Chavez} studied value-at-risk.
Errais et al. \cite{Errais} used Hawkes process to model the credit risk. 
Embrechts et al. \cite{Embrechts} fit the Hawkes process to financial data.
 
The Hawkes process has also been applied to many other fields, including 
seismology, see e.g. Hawkes and Adamopoulos \cite{HawkesIV}, Ogata \cite{OgataII}, 
sociology, see e.g. Crane and Sornette \cite{Crane} and Blundell et al. \cite{Blundell},
and neuroscience, see e.g. Chornoboy et al. \cite{Chornoboy}, Pernice et al. \cite{PerniceI},
Pernice et al. \cite{PerniceII}. For a survey of the Hawkes process and its applications, we refer to Liniger \cite{Liniger}
and Zhu \cite{ZhuThesis}.

\subsection{Hawkes Process with Different Exciting Functions}

In this paper, we are interested to study an extension of the linear Hawkes process proposed by Fierro et al. \cite{Fierro}.
It is based on the immigration-birth representation structure of the linear Hawkes process.
The classical Hawkes process can be constructed from a homogeneous Poisson
process (immigration) and using the same exciting function for different generations of
offspring (birth). In some fields, e.g. seismology, where main shocks produce
aftershocks with possibly different intensities, that naturally leads to the study of a Hawkes process
with different exciting functions as proposed in Fierro et al. \cite{Fierro}.

Let $(N^{n})_{n\in\mathbb{N}}$ be a sequence of non-explosive simple point processes
without common jumps so that
\begin{itemize}
\item
$N^{0}$ is an inhomogeneous Poisson process with intensity $\gamma_{0}(t)$ at time $t$.

\item
For every $n\in\mathbb{N}$, $N^{n}$ is a simple point process with 
intensity $\lambda^{n}_{t}=\int_{0}^{t}\gamma_{n}(t-s)N^{n-1}(ds)$,
where the integral $\int_{0}^{t}\gamma_{n}(t-s)N^{n-1}(ds)$ 
denotes for $\int_{(0,t)}\gamma_{n}(t-s)N^{n-1}(ds)=\sum_{\tau\in N^{n-1},0<\tau<t}\gamma_{n}(t-\tau)$, and $\gamma_{n}(\cdot):\mathbb{R}^{+}\rightarrow\mathbb{R}^{+}$. Note that by definition, the intensity is $\mathcal{F}_{t}$-predictable. 

\item
For every $n\in\mathbb{N}\cup\{0\}$, conditional on $N^{0},\ldots, N^{n}$, 
$N^{n+1}$ is a inhomogeneous Poisson process with intensity $\lambda^{n+1}$.
\end{itemize}
The existence of such a process was proved as Proposition 2.1. in Fierro et al. \cite{Fierro}.

Using the notation of immigration-birth representation, 
$N^{0}$ is called the immigrant process
and $N^{n}$ the $n$th generation offspring process.

Let $N:=\sum_{n=0}^{\infty}N^{n}$. $N$ is said to be the Hawkes process with excitation functions 
$(\gamma_{n})_{n\in\mathbb{N}\cup\{0\}}$. If $\gamma_{0}(t)\equiv\overline{\gamma}_{0}>0$ and $\gamma_{n}(t)=h(t)$ for any $n\in\mathbb{N}$,
then the model reduces to the classical linear Hawkes process $N$ with intensity at time $t$ given by
\begin{equation}\label{ClassicalLinear}
\lambda_{t}=\overline{\gamma}_{0}+\int_{0}^{t}h(t-s)N(ds).
\end{equation}

\begin{assumption}\label{MainAssumption}
(i) $\overline{\gamma}_{0}:=\lim_{t\rightarrow\infty}\frac{1}{t}\int_{0}^{t}\gamma_{0}(s)ds$ exists and is finite.

(ii) $\rho:=\sup_{n\in\mathbb{N}}\int_{0}^{\infty}\gamma_{n}(t)dt<1$.
\end{assumption}

Under Assumption \ref{MainAssumption}, Fierro et al. \cite{Fierro} showed that for any $t\geq 0$,
\begin{equation}
\mathbb{E}[N_{t}]=\int_{0}^{t}\sum_{n=0}^{\infty}(\gamma_{0}\ast\cdots\ast\gamma_{n})(s)ds<\infty.
\end{equation}

Fierro et al. \cite{Fierro} proved the following law of large numbers result under Assumption \ref{MainAssumption},
\begin{equation}\label{LLN}
\frac{N_{t}}{t}\rightarrow m,\qquad\text{almost surely, as $t\rightarrow\infty$},
\end{equation}
where $m:=\sum_{n=0}^{\infty}m_{n}$, $m_{0}:=\overline{\gamma}_{0}$ and
\begin{equation}\label{mn}
m_{n}:=\overline{\gamma}_{0}\prod_{i=1}^{n}\int_{0}^{\infty}\gamma_{i}(u)du,\qquad n\in\mathbb{N}.
\end{equation}
It is easy to check that in the case of classical linear Hawkes process \eqref{ClassicalLinear},
\begin{equation}
m=\overline{\gamma}_{0}\sum_{n=1}^{\infty}\Vert h\Vert_{L^{1}}^{n}=\frac{\overline{\gamma}_{0}}{1-\Vert h\Vert_{L^{1}}},
\end{equation}
which is consistent with the results in Hawkes \cite{Hawkes}.

\begin{assumption}\label{AssumptionTwo}
\begin{equation}
\lim_{t\rightarrow\infty}\sqrt{t}\left[\frac{1}{t}\int_{0}^{t}\gamma_{0}(s)ds-\overline{\gamma}_{0}\right]=0,
\end{equation}
and
\begin{equation}
\lim_{t\rightarrow\infty}\sqrt{t}\int_{t}^{\infty}\sum_{p=1}^{\infty}
\gamma_{p}\ast\cdots\ast\gamma_{1}(s)ds=0.
\end{equation}
\end{assumption}

Further assume Assumption \ref{AssumptionTwo}, 
Fierro et al. \cite{Fierro} also obtained the central limit theorem, 
which is the main result of their paper,
\begin{equation}
\frac{N_{t}-mt}{\sqrt{t}}\rightarrow N(0,\sigma^{2}),
\end{equation}
in distribution as $t\rightarrow\infty$, where
\begin{equation}
\sigma^{2}:=\sum_{j=0}^{\infty}\left(1+\sum_{p=1}^{\infty}\prod_{i=j+1}^{p+j}\int_{0}^{\infty}\gamma_{i}(u)du\right)^{2}m_{j}.
\end{equation}
It is easy to check that in the case of classical linear Hawkes process \eqref{ClassicalLinear},
\begin{equation}
\sigma^{2}=\sum_{j=0}^{\infty}\left(1+\sum_{p=0}^{\infty}\Vert h\Vert_{L^{1}}^{p}\right)^{2}\overline{\gamma}_{0}\Vert h\Vert_{L^{1}}^{j}
=\frac{\overline{\gamma}_{0}}{(1-\Vert h\Vert_{L^{1}})^{3}},
\end{equation}
which is consistent with the results in Bacry et al. \cite{Bacry}.

The paper is organized as the following. In Section \ref{ErgodicSection}, we show that there exists
a stationary version of the Hawkes process with different exciting functions and we will show the convergence
to the equilibrium. In Section \ref{ConnectionSection}, we will point out the connections of the Hawkes process
with different exciting functions to the classical multivariate linear Hawkes process, which has been well studied
in the literature. In Section \ref{MDPLDPSection}, we obtain both the large deviations and the moderate deviations
for the model. Finally, we discuss some applications to finance in Section \ref{FinanceSection}.

\section{Convergence to Equilibrium}\label{ErgodicSection}

Assume that $\gamma_{0}\equiv\overline{\gamma}_{0}$ is a positive constant 
and Assumption \ref{MainAssumption} (ii) holds, then, there exists a stationary version
of the Hawkes process $N^{\dagger}$ with exciting functions 
$(\gamma_{n})_{n\in\mathbb{N}}\cup\{\overline{\gamma}_{0}\}$ constructed as follows.

Let $N^{\dagger,0}$ be a homogeneous Poisson process with intesntiy $\overline{\gamma}_{0}$
on $\mathbb{R}$ and for each $n\in\mathbb{N}$, $N^{\dagger,n}$ is an inhomogeneous Poisson
process with intensity
\begin{equation}
\lambda^{\dagger,n}_{t}=\int_{-\infty}^{t}\gamma^{n}(t-s)N^{\dagger,n-1}(ds),
\end{equation}
and $N^{\dagger}=\sum_{n=0}^{\infty}N^{\dagger,n}$.

The space of integer-valued measures is endowed with the vague topology, i.e.
$N^{n}$ converges to $N$ if and only if for any continuous function $\phi$ with compact support,
$\int\phi(x)N^{n}(dx)\rightarrow\int\phi(x)N(dx)$.

Given a simple point process $N$ on $\mathbb{R}$, one can define $\theta_{t}N$ as the
process shifted by time $t$, i.e. $\theta_{t}N(A)=N(A+t)$, where $A+t:=\{s+t:s\in A\}$
for any Borel set $A$ associated with the vague topology. 

We say a sequence of simple point processes $N^{n}$ converges to 
a simple point process $N$ in distribution if for any Borel set $A$ associated
with the vague topology, $\lim_{n\rightarrow\infty}\mathbb{P}(N^{n}\in A)=\mathbb{P}(N\in A)$
and the convergences is in variation if 
\begin{equation}
\lim_{n\rightarrow\infty}\sup_{A}|\mathbb{P}(N^{n}\in A)-\mathbb{P}(N\in A)|=0.
\end{equation}
This is the notation given in Br\'{e}maud and Massouli\'{e} \cite{Bremaud}.

In Daley and Vere-Jones \cite{Daley}'s terminology, convergence in distribution (variation)
is referred to as the weak (strong) convergence and the stationarity associated
with the stationary limit is referred to as the weak (strong) stationarity.

For a given simple point process $N$ on $\mathbb{R}$, 
let $N^{+}$ be its restriction to $\mathbb{R}^{+}$.

\begin{theorem}\label{ergodicity}
Let $N=\sum_{n=0}^{\infty}N^{n}$ be the Hawkes process with exciting functions 
$(\gamma_{n})_{n\in\mathbb{N}}\cup\{\overline{\gamma}_{0}\}$ with empty history, i.e.
$N(-\infty,0]=0$ and satisfies Assumption \ref{MainAssumption} (ii). Then, the following is true.

(i) $\theta_{s}N$ converges to $N^{\dagger}$ weakly
as $s\rightarrow\infty$, i.e. $(\theta_{s}N)^{+}$ converges in distribution
to $(N^{\dagger})^{+}$.

(ii) If we further assume that $\int_{0}^{\infty}t\gamma_{1}(t)dt<\infty$,
then, $\theta_{s}N$ converges to $N^{\dagger}$ strongly
as $s\rightarrow\infty$, i.e. $(\theta_{s}N)^{+}$ converges in variation
to $(N^{\dagger})^{+}$.
\end{theorem}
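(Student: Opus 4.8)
The plan is to exploit the immigration--birth (cluster) representation and compare the process $\theta_s N$ with empty history to the stationary process $N^\dagger$ by tracking which clusters are ``born too late to matter.'' Since $\gamma_0 \equiv \overline{\gamma}_0$ is a constant, the immigrant process $N^0$ is a homogeneous Poisson process on $\mathbb{R}^+$, while $N^{\dagger,0}$ is the same rate Poisson process on all of $\mathbb{R}$. Each immigrant $\tau$ spawns a cluster $C_\tau$ (the union of all its descendants through the exciting functions $\gamma_1,\gamma_2,\dots$), and by the branching structure these clusters are mutually independent given the immigrant locations, with a law depending only on the generation-by-generation offspring structure — crucially \emph{not} on $\tau$ itself, only on the offsets relative to $\tau$. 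The key finiteness input is Assumption \ref{MainAssumption}(ii): since $\rho = \sup_n \int_0^\infty \gamma_n(t)\,dt < 1$, the expected total cluster size is bounded by $\sum_{k\ge 0}\rho^k = (1-\rho)^{-1} < \infty$, so a.s. every cluster is finite and the sum $N = \sum_n N^n$ is locally finite.

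First I would set up the coupling. Restricting attention to a fixed observation window $(0,T]$ after the time shift, i.e. to $(\theta_s N)^+$ on $(0,T]$, a point of $\theta_s N$ lying in $(0,T]$ is a descendant of some immigrant $\tau \in (0, s+T]$ (immigrants after $s+T$ cannot contribute, and the empty-history assumption means there are none before $0$). For $N^\dagger$, the analogous points in $(s, s+T]$ come from immigrants $\tau \in (-\infty, s+T]$. Couple the two immigrant Poisson processes to agree on $(0, s+T]$ and let $N^\dagger$ have extra immigrants on $(-\infty, 0]$; give matched immigrants identical cluster structures. Then $(\theta_s N)^+$ and $(N^\dagger)^+$ restricted to $(0,T]$ differ only through clusters seeded by immigrants in $(-\infty,0]$ that reach into the window $(s, s+T]$ — call this discrepancy event $A_s^{(T)}$. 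For part (i), weak convergence, it suffices to show $\mathbb{P}(A_s^{(T)}) \to 0$ as $s\to\infty$ for each fixed $T$; then for any vague Borel set $A$ one gets $|\mathbb{P}((\theta_s N)^+\in A) - \mathbb{P}((N^\dagger)^+\in A)| \to 0$ after reducing to cylinder events on bounded windows (the standard density argument for the vague topology; cf. Br\'emaud--Massouli\'e \cite{Bremaud}). The probability $\mathbb{P}(A_s^{(T)})$ is bounded, via the union bound over immigrants and Campbell's formula, by $\overline{\gamma}_0 \int_{0}^{\infty} \mathbb{P}(\text{a cluster seeded at }0\text{ has a point in }[s+u, \infty))\,du$, and this tends to $0$ because each cluster is a.s. finite — hence has a finite (random) right endpoint $R$ — and $\overline{\gamma}_0\int_0^\infty \mathbb{P}(R \ge s+u)\,du = \overline{\gamma}_0 \mathbb{E}[(R-s)^+] \to 0$ provided $\mathbb{E}[R] < \infty$. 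Bounding $\mathbb{E}[R]$ by the total cluster ``time-span'' and using $\sum_k \rho^k < \infty$ gives finiteness (the span of generation $k$ is controlled by $k$-fold convolutions whose masses are summable), which closes part (i).

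For part (ii), strong convergence (convergence in variation), the coupling is the same but now I need $\sup_A |\mathbb{P}((\theta_sN)^+\in A)-\mathbb{P}((N^\dagger)^+\in A)| \to 0$ over \emph{all} vague Borel sets, i.e. I need a genuine coupling on the whole half-line $\mathbb{R}^+$ (not just bounded windows) such that the two restricted processes coincide with probability tending to $1$. The maximal coupling inequality gives $\sup_A|\cdots| \le \mathbb{P}((\theta_s N)^+ \ne (N^\dagger)^+)$, and under the above coupling this is the probability that \emph{some} cluster seeded by an immigrant in $(-\infty,0]$ has a descendant in $(s,\infty)$. By Campbell's formula this equals $1 - \exp(-\overline{\gamma}_0 \mathbb{E}[(R-s)^+])$ up to the usual Poisson bookkeeping, so it is $\to 0$ iff $\mathbb{E}[(R-s)^+]\to 0$, which holds as soon as $\mathbb{E}[R]<\infty$ — and the extra hypothesis $\eta := \sup_n \int_0^\infty t\,\gamma_n(t)\,dt < \infty$ is exactly what upgrades the generation-wise span control into a summable bound: generation $k$ sits at expected time $\le k\eta/(1-\rho)$-type estimates away from the immigrant, weighted by mass $\le \rho^k$, so $\mathbb{E}[R] \le \sum_k (k+1)\,\eta\,\rho^k < \infty$. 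I expect the main obstacle to be making the cluster right-endpoint bound $\mathbb{E}[R]<\infty$ fully rigorous under the \emph{inhomogeneous} generation structure (the $\gamma_n$ vary with $n$, so one cannot simply cite the stationary linear-Hawkes cluster lemma of Hawkes--Oakes \cite{HawkesII}); this requires a careful induction over generations bounding both the total mass and the time-extent of the branching tree using $\rho$ and $\eta$, after which both parts follow by routine coupling estimates.
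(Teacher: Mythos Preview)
Your coupling setup is essentially the same decomposition the paper uses: split $N^{\dagger}$ into the part generated by immigrants after time $0$ (which, after shifting, coincides in law with $\theta_{s}N$) and the residual process $N^{\dagger}_{-s}$ consisting of descendants of immigrants in $(-\infty,0]$. So the architecture is right. The problem is in how you bound the residual for part~(i).

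You bound $\mathbb{P}(A_{s}^{(T)})$ by $\overline{\gamma}_{0}\int_{0}^{\infty}\mathbb{P}(R\geq s+u)\,du=\overline{\gamma}_{0}\,\mathbb{E}[(R-s)^{+}]$, where $R$ is the right endpoint of a cluster, and then assert that $\mathbb{E}[R]<\infty$ follows from $\sum_{k}\rho^{k}<\infty$. This is false: summability of the generation \emph{masses} controls the expected cluster \emph{size}, not the expected \emph{time span}. Already for the first generation, the probability that some offspring lands beyond $t$ is $1-e^{-H_{1}(t)}$ with $H_{1}(t)=\int_{t}^{\infty}\gamma_{1}(s)\,ds$, and $\int_{0}^{\infty}(1-e^{-H_{1}(t)})\,dt=\infty$ whenever $\int_{0}^{\infty}t\gamma_{1}(t)\,dt=\infty$. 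So under Assumption~\ref{MainAssumption}(ii) alone, $\mathbb{E}[R]$ can be infinite and your estimate for part~(i) does not close. (You implicitly acknowledge this by re-deriving $\mathbb{E}[R]<\infty$ from $\eta<\infty$ in part~(ii); that re-derivation would be redundant if part~(i) already gave it.)

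The fix is to count \emph{points} landing in the window rather than \emph{clusters} reaching it: bound $\mathbb{P}(A_{s}^{(T)})$ by the expected number of residual points in $(0,T]$, namely $\int_{0}^{T}\sum_{n\geq 1}\mathbb{E}[\lambda^{\dagger,n}_{-s}(t)]\,dt$. A direct convolution computation gives, for each $n$ and $t$, a quantity dominated by $\overline{\gamma}_{0}\,n\rho^{n}$ uniformly in $s$, and tending to $0$ as $s\to\infty$; dominated convergence in $n$ and $t\in[0,T]$ then yields $\mathbb{P}(A_{s}^{(T)})\to 0$ using only $\rho<1$. This is exactly the route the paper takes. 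Your part~(ii) is fine in spirit: there the hypothesis $\eta<\infty$ does give $\Vert H_{j}\Vert_{L^{1}}=\int_{0}^{\infty}t\gamma_{j}(t)\,dt\leq\eta$, and the same expected-count bound with $T=\infty$ is controlled by $\overline{\gamma}_{0}\eta\sum_{n}n\rho^{n-1}<\infty$, which is the content of your $\sum_{k}(k+1)\eta\rho^{k}$ estimate.
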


\begin{proof}
(i) For both $N^{\dagger}$ and $N$, let $\theta_{s}N^{\dagger}$ and $\theta_{s}N$
be the shifted version obtained by setting time $s$ as the origin and shift the process
backwards in times by $s$ to bring the origin back to $0$, 
that is, $\theta_{s}N(A)=N(A+s)$, where $A+s:=\{t+s:t\in A\}$ for any Borel set $A$.
We can decompose $\theta_{s}N^{\dagger}$ into two components, one component
has the same dynamics as $\theta_{s}N$, being built from the points generated by
the homogeneous Poisson process $\overline{\gamma}_{0}$ and its offspring $(N^{n})_{n\geq 1}$
after time $-s$, the other component $N^{\dagger}_{-s}$ that consists of the offspring
of the points generated by homogeneous Poisson process $\overline{\gamma}_{0}$ before time $-s$.
Hence, we have
\begin{equation}
\lambda_{-s}^{\dagger,1}(t)=\int_{-\infty}^{-s}\gamma_{1}(t-u)N^{\dagger,0}_{-s}(du),
\qquad t\geq -s,
\end{equation}
and 
\begin{equation}
\lambda_{-s}^{\dagger,n}(t)=\int_{-\infty}^{t}\gamma_{n}(t-u)N^{\dagger,n-1}_{-s}(du),
\qquad t\geq -s, n\geq 2.
\end{equation}
Let us define
\begin{equation}
H_{n}(t):=\int_{t}^{\infty}\gamma_{n}(s)ds,\qquad n\geq 1.
\end{equation}
It is easy to compute that
\begin{equation}
\mathbb{E}[\lambda_{-s}^{\dagger,1}(t)]=\int_{-\infty}^{-s}\gamma_{1}(t-u)\overline{\gamma}_{0}du
=\overline{\gamma}_{0}\int_{t+s}^{\infty}\gamma_{1}(u)du=\overline{\gamma}_{0}H_{1}(t+s),
\end{equation}
and
\begin{align}
\mathbb{E}[\lambda_{-s}^{\dagger,2}(t)]
&=\overline{\gamma}_{0}\int_{-\infty}^{t}\gamma_{2}(t-u)H_{1}(u+s)du
\\
&=\overline{\gamma}_{0}\int_{-\infty}^{t+s}\gamma_{2}(t+s-u)H_{1}(u)du
=\overline{\gamma}_{0}(\gamma_{2}\ast H_{1})(t+s).
\nonumber
\end{align}
Iteratively, we get
\begin{equation}
\mathbb{E}[\lambda_{-s}^{\dagger,n}(t)]
=\overline{\gamma}_{0}
\left(\gamma_{n}\ast\cdots\ast\gamma_{2}\ast H_{1}\right)(t+s).
\end{equation}

Therefore, for any $T>0$,
\begin{align}
\mathbb{P}(N_{-s}^{\dagger}(0,T)>0)
&=1-\mathbb{E}\left[\exp\left(-\int_{0}^{T}\lambda_{s}^{\dagger}(t)dt\right)\right]
\\
&\leq\mathbb{E}\left[\int_{0}^{T}\lambda^{\dagger}_{-s}(t)dt\right]
\nonumber
\\
&=\int_{0}^{T}\sum_{n=1}^{\infty}\mathbb{E}[\lambda_{-s}^{\dagger,n}(t)]dt,
\nonumber
\end{align}
where $\lambda_{s}^{\dagger}(t):=\sum_{n=1}^{\infty}\lambda_{-s}^{\dagger,n}(t)$ is the intensity of $N_{-s}^{\dagger}$ at time $t$.

Since $\gamma_{n}(t)$ is integrable for any $n$, $H_{1}(t)\rightarrow 0$ as $t\rightarrow\infty$.
Thus, $\mathbb{E}[\lambda_{-s}^{\dagger,n}(t)]\rightarrow 0$ as $s\rightarrow\infty$
for any $t$. Moreover,
\begin{equation}
\mathbb{E}[\lambda_{-s}^{\dagger,n}(t)]
\leq\overline{\gamma}_{0}
H_{1}(0)\left(\gamma_{n}\ast\cdots\ast\gamma_{2}\ast 1\right)(t+s),
\end{equation}
where $1$ stands for the function from $\mathbb{R}^{+}$ to $\mathbb{R}$ that takes the constant value $1$.
Thus, for any $t$,
\begin{equation}
\limsup_{s\rightarrow\infty}\mathbb{E}[\lambda_{-s}^{\dagger,n}(t)]
\leq\overline{\gamma}_{0}\prod_{i=1}^{n}\Vert\gamma_{i}\Vert_{L^{1}}
\leq\overline{\gamma}_{0}\rho^{n},
\end{equation}
by Assumption \ref{MainAssumption} (ii), which is summable in $n$. Therefore, for any $T>0$,
\begin{equation}
\mathbb{P}(N_{-s}^{\dagger}(0,T)>0)
\leq\int_{0}^{T}\sum_{n=1}^{\infty}\mathbb{E}[\lambda_{-s}^{\dagger,n}(t)]dt\rightarrow 0,
\end{equation}
as $s\rightarrow\infty$. Hence, we proved the weak asymptotic stationarity of $N$.

(ii) Since $\int_{0}^{\infty}t\gamma_{1}(t)dt<\infty$, 
\begin{align}
\int_{0}^{\infty}(\gamma_{n}\ast\gamma_{n-1}\ast\cdots\gamma_{2}\ast H_{1})(t+s)dt
&\leq\int_{0}^{\infty}(\gamma_{n}\ast\gamma_{n-1}\ast\cdots\gamma_{2}\ast H_{1})(t)dt
\\
&=\Vert\gamma_{n}\Vert_{L^{1}}\cdots\Vert\gamma_{j+2}\Vert_{L^{1}}\Vert H_{1}\Vert_{L^{1}}<\infty,
\nonumber
\end{align}
since $\Vert H_{1}\Vert_{L^{1}}=\int_{0}^{\infty}\int_{t}^{\infty}\gamma_{1}(s)dsdt
=\int_{0}^{\infty}t\gamma_{1}(t)dt<\infty$. 
Together with Assumption \ref{MainAssumption} (ii)
and the proofs in part (i), we get
\begin{equation}
\int_{0}^{\infty}\sum_{n=1}^{\infty}\mathbb{E}[\lambda_{-s}^{\dagger,n}(t)]dt
\leq\overline{\gamma}_{0}\sum_{n=1}^{\infty}\rho^{n-1}\int_{0}^{\infty}t\gamma_{1}(t)dt<\infty,
\end{equation}
and therefore
\begin{equation}
\mathbb{P}(N_{-s}^{\dagger}(0,\infty)>0)
\leq\int_{0}^{\infty}\sum_{n=1}^{\infty}\mathbb{E}[\lambda_{-s}^{\dagger,n}(t)]dt\rightarrow 0,
\end{equation}
as $s\rightarrow\infty$. Hence, we proved the strong asymptotic stationarity of $N$.
\end{proof}

\begin{remark}
In Theorem \ref{ergodicity}, we assumed that $\gamma_{0}(t)\equiv\overline{\gamma}_{0}$ being a constant.
It will be interesting to extend the convergence to equilibrium results in Theorem \ref{ergodicity} under a weaker assumption.
\end{remark}

\section{Connections to Multivariate Hawkes Processes}\label{ConnectionSection}

In this section, we will show that the Hawkes process with different exciting functions
is related to the multivariate Hawkes process, 
see e.g. Hawkes \cite{Hawkes}, Liniger \cite{Liniger}, Bacry et al. \cite{Bacry}.
A multivariate Hawkes process is multidimensional point process $(N_{1}(t),\ldots,N_{d}(t))$
such that for any $1\leq i\leq d$, $N_{i}(t)$ is a simple point process with intensity
\begin{equation}
\lambda_{i}(t):=\nu_{i}+\sum_{j=1}^{d}\int_{0}^{t}\phi_{ij}(t-s)N_{j}(ds),
\end{equation}
where $\nu_{i}$ are non-negative constants and $\phi_{ij}(t)$ are non-negative real-valued functions, 
and $\Vert\phi_{ij}\Vert_{L^{1}}<\infty$. 
If the spectral radius of the matrix $(\Vert\phi_{ij}\Vert_{L^{1}})_{1\leq i,j\leq d}$ is less than $1$,
then, we have the law of large numbers, see e.g. Bacry et al. \cite{Bacry}
\begin{equation}\label{MultiLLN}
\frac{1}{t}(N_{1}(t),\ldots,N_{d}(t))^{t}\rightarrow(I-\Phi)^{-1}\nu,
\end{equation}
as $t\rightarrow\infty$ where $\nu=(\nu_{1},\ldots,\nu_{d})^{t}$ 
and $\Phi=(\Vert\phi_{ij}\Vert_{L^{1}})_{1\leq i,j\leq d}$.

Let us consider a special case of the Hawkes process with 
exciting functions $(\gamma_{n})_{n\in\mathbb{N}\cup\{0\}}$ by letting
$\gamma_{0}(t)\equiv\overline{\gamma}_{0}$, 
$\gamma_{n}(t)=h(t)$ if $n\in\mathbb{N}$ is odd and $\gamma_{n}(t)=g(t)$ if $n\in\mathbb{N}$
is even. We can consider two mutually exciting processes $N^{\text{even}}$ and $N^{\text{odd}}$
defined as
\begin{equation}
N^{\text{even}}:=\sum_{n=0}^{\infty}N^{2n}
\qquad
\text{and}
\qquad
N^{\text{odd}}:=\sum_{n=0}^{\infty}N^{2n+1}.
\end{equation}
$N^{\text{even}}$ and $N^{\text{odd}}$ are mutually exciting since $N^{n}$ is generated
based on $N^{n-1}$ and a jump in $N^{2n}$ will lead to more jumps for $N^{2n+1}$
and a jump in $N^{2n+1}$ will on the other hand contribute to more jumps for $N^{2n+2}$.
By the law of large numbers result due to Fierro et al. \cite{Fierro},
\begin{equation}
\frac{N_{t}}{t}=\frac{N^{\text{even}}_{t}}{t}+\frac{N^{\text{odd}}_{t}}{t}
\rightarrow m
\end{equation}
a.s. as $t\rightarrow\infty$, where
\begin{align}\label{evenoddlimit}
m&=\overline{\gamma}_{0}\sum_{n=1}^{\infty}\prod_{i=1}^{n}\Vert\gamma_{i}\Vert_{L^{1}}
\\
&=\overline{\gamma}_{0}\left(\Vert h\Vert_{L^{1}}+\Vert h\Vert_{L^{1}}\Vert g\Vert_{L^{1}}
+\Vert h\Vert_{L^{1}}\Vert g\Vert_{L^{1}}\Vert h\Vert_{L^{1}}+\cdots\right)
\nonumber
\\
&=\frac{1+\Vert h\Vert_{L^{1}}}{1-\Vert h\Vert_{L^{1}}\Vert g\Vert_{L^{1}}}.
\nonumber
\end{align}
Now, let us point out the connections to the multivariate Hawkes process.
The intensity of $N^{\text{even}}$ is given by
\begin{align}
\lambda^{\text{even}}_{t}
&=\sum_{n=0}^{\infty}\lambda^{2n}_{t}
\\
&=\overline{\gamma}_{0}+\sum_{n=1}^{\infty}\int_{0}^{t}h(t-s)N^{2n-1}(ds)
\nonumber
\\
&=\overline{\gamma}_{0}+\int_{0}^{t}h(t-s)N^{\text{odd}}(ds).
\nonumber
\end{align}
Similarly, the intensity of $N^{\text{odd}}$ is given by
\begin{equation}
\lambda^{\text{odd}}_{t}=\int_{0}^{t}g(t-s)N^{\text{even}}(ds).
\end{equation}
Therefore, $(N^{\text{even}}_{t},N^{\text{odd}}_{t})$ is a bivariate Hawkes process with
\begin{equation}
\nu=
\left(
\begin{array}{c}
\overline{\gamma}_{0}
\\
0
\end{array}
\right)
\qquad
\text{and}
\qquad
\Phi=
\left(
\begin{array}{cc}
0 & \Vert h\Vert_{L^{1}}
\\
\Vert g\Vert_{L^{1}} & 0
\end{array}
\right).
\end{equation}
Thus, by \eqref{MultiLLN},
\begin{equation}
\frac{1}{t}\left(
\begin{array}{c}
N^{\text{even}}_{t}
\\
N^{\text{odd}}_{t}
\end{array}
\right)
\rightarrow
\left(
\begin{array}{cc}
1 & -\Vert h\Vert_{L^{1}}
\\
-\Vert g\Vert_{L^{1}} & 1
\end{array}
\right)^{-1}
\left(
\begin{array}{c}
\overline{\gamma}_{0}
\\
0
\end{array}
\right)
=\left(
\begin{array}{c}
\frac{\overline{\gamma}_{0}}{1-\Vert h\Vert_{L^{1}}\Vert g\Vert_{L^{1}}}
\\
\frac{\overline{\gamma}_{0}\Vert h\Vert_{L^{1}}}{1-\Vert h\Vert_{L^{1}}\Vert g\Vert_{L^{1}}}
\end{array}
\right),
\end{equation}
as $t\rightarrow\infty$, which is consistent with \eqref{evenoddlimit}.

Indeed, we can work in a more generating setting. Let $(A_{i})_{i=1}^{d}$ be a partition
of $\mathbb{N}\cup\{0\}$, i.e. $A_{i}\cap A_{j}=\emptyset$ for any $i\neq j$
and $\cup_{i=1}^{d}A_{i}=\mathbb{N}\cup\{0\}$. 
Assume that $\gamma_{0}\equiv\overline{\gamma}_{0}$ and $(\gamma_{n})_{n\in\mathbb{N}}$
may not be homogeneous. Define
\begin{equation}
N_{1}=\sum_{n\in A_{1}}N^{n},
\qquad
N_{2}=\sum_{n\in A_{2}}N^{n},
\qquad
\cdots\cdots
\qquad
N_{d}=\sum_{n\in A_{d}}N^{n}.
\end{equation}
Then, the $d$-dimensional process $(N_{1},\ldots,N_{d})$ has the mutually exciting property
and it is more general than the classical multivariate Hawkes process.
Since we proved convergence to equilibrium in Theorem \ref{ergodicity}, by ergodic theorem,
\begin{equation}
\frac{1}{t}(N_{1},N_{2},\ldots,N_{d})\rightarrow
\left(\sum_{n\in A_{1}}m_{n},\sum_{n\in A_{2}}m_{n},\ldots,\sum_{n\in A_{d}}m_{n}\right),
\end{equation}
a.s. as $t\rightarrow\infty$, where $m_{n}$ is defined in \eqref{mn}.

\section{Moderate and Large Deviations}\label{MDPLDPSection}

In this section, we are interested to study the moderate and large deviations for $\mathbb{P}(\frac{N_{t}}{t}\in\cdot)$.
The large deviations for classical Hawkes processes have been well studied in the literature 
for both linear and nonlinear cases, see e.g. Bordenave and Torrisi \cite{Bordenave},
Zhu \cite{ZhuI} and Zhu \cite{ZhuII}. The moderate deviations for classical Hawkes processes
have been studied for the linear case, see e.g. Zhu \cite{ZhuMDP}.

In the linear case, let us assume that
\begin{equation}
\lambda_{t}=\nu+\int_{0}^{t}h(t-s)N(ds),
\end{equation}
where $\Vert h\Vert_{L^{1}}<1$ and $\int_{0}^{\infty}th(t)dt<\infty$.
Bordenave and Torrisi \cite{Bordenave} proved a large deviation principle for $\mathbb{P}(\frac{N_{t}}{t}\in\cdot)$ with the rate function
\begin{equation}\label{classicalLDP}
I(x)=
\begin{cases}
x\log\left(\frac{x}{\nu+x\Vert h\Vert_{L^{1}}}\right)-x+x\Vert h\Vert_{L^{1}}+\nu &\text{if $x\in[0,\infty)$}
\\
+\infty &\text{otherwise}
\end{cases}.
\end{equation}
Moreover, Karabash and Zhu \cite{Karabash} obtained a large deviation principle
for the linear Hawkes process with random marks.

For nonlinear Hawkes processes, i.e. when $\lambda(\cdot)$ is nonlinear, Zhu \cite{ZhuI} 
first considered the case
that $h(\cdot)$ is exponential, i.e. when the Hawkes process is Markovian 
and obtained a large deviation principle for $\mathbb{P}(N_{t}/t\in\cdot)$
Then, Zhu \cite{ZhuI} also proved the large deviation principle for the case when $h(\cdot)$ 
is a sum of exponentials and used that as an approximation to recover the result 
for the linear case proved in Bordenave and Torrisi \cite{Bordenave} and also
for a special class of general nonlinear Hawkes processes.
For the most general $h(\cdot)$ and $\lambda(\cdot)$, Zhu \cite{ZhuII} proved a process-level, 
i.e. level-3 large deviation principle for the Hawkes process and used contraction 
principle to obtain a large deviation principle for $\mathbb{P}(N_{t}/t\in\cdot)$.

The large deviations result for $(N_{t}/t\in\cdot)$ is helpful to study the ruin probabilities 
of a risk process when the claims arrivals follow a Hawkes process. 
Stabile and Torrisi \cite{Stabile} considered risk processes with non-stationary 
Hawkes claims arrivals and studied the asymptotic 
behavior of infinite and finite horizon ruin probabilities under light-tailed 
conditions on the claims.
The corresponding result for heavy-tailed claims was obtained by Zhu \cite{ZhuRuin}. 

Before we proceed, let us recall that
a sequence of probability measures $(P_{n})_{n\in\mathbb{N}}$ 
on a topological space $X$ satisfies
a large deviation principle with speed $n$ and rate function $I:X\rightarrow\mathbb{R}$ 
if $I$ is non-negative,
lower semicontinuous and for any measurable set $A$, 
\begin{equation}
-\inf_{x\in A^{o}}I(x)\leq\liminf_{n\rightarrow\infty}\frac{1}{n}\log P_{n}(A)
\leq\limsup_{n\rightarrow\infty}\frac{1}{n}\log P_{n}(A)\leq-\inf_{x\in\overline{A}}I(x).
\end{equation}
Here, $A^{o}$ is the interior of $A$ and $\overline{A}$ is its closure. 
We refer to Dembo and Zeitouni \cite{Dembo} or Varadhan \cite{VaradhanII} 
for general background of large deviations and the applications.

\begin{theorem}\label{LogLimit}
 Let $\Gammaset$ denote $(\Vert\gamma_{i}\Vert_{L^{1}})_{i\in\mathbb{N}}$. Under Assumption \ref{MainAssumption} and $N(-\infty,0]=0$,
for any $\theta\in\mathbb{R}$, $\Gamma(\theta):=\lim_{t\rightarrow\infty}
\frac{1}{t}\log\mathbb{E}[e^{\theta N_{t}}]$
exists and
\begin{equation}
\Gamma(\theta)=\overline{\gamma}_{0}(e^{f_{\infty}(\Gammaset,\theta)}-1),
\end{equation}
where $f_{\infty}(\Gammaset,\theta):=\lim_{M\rightarrow\infty}f(M,M,\Gammaset,\theta)$ exists 
on the extended real line and 
$f(n,M,\Gammaset,\theta)$ is define recursively as
\begin{equation}\label{eq:f_infinity}
f(n,M,\Gammaset,\theta)=\theta+\Vert\gamma_{M+1-n}\Vert_{L^{1}}(e^{f(n-1,M,\Gammaset,\theta)}-1),
\qquad 1\leq n\leq M,
\end{equation}
with $f(0,M,\Gammaset,\theta)=0$.
\end{theorem}

\begin{remark}
It is easy to compute that
\begin{equation}
\frac{\partial}{\partial\theta}f(n,M,\Gammaset,\theta)
=1+\Vert\gamma_{M+1-n}\Vert_{L^{1}}e^{f(n-1,M,\Gammaset,\theta)}\frac{\partial}{\partial\theta}f(n-1,M,\Gammaset,\theta).
\end{equation}
Now, note that $f(n,M,\Gammaset,\theta)|_{\theta=0}=0$ for every $n$, $M$, and $\Gammaset$.
By iterating and setting $\theta=0$, we get
\begin{equation}
\frac{\partial}{\partial\theta}f(M,M,\Gammaset,\theta)\bigg|_{\theta=0}
=1+\sum_{p=1}^{M}\prod_{j=1}^{p}\Vert\gamma_{j}\Vert_{L^{1}}=\sum_{p=0}^{M}m_{p},
\end{equation}
and as $M$ goes to $\infty$, we get 
$\lim_{M\rightarrow\infty}\frac{\partial}{\partial\theta}f(M,M,\Gammaset,\theta)|_{\theta=0}
=\sum_{p=0}^{M}m_{p}$, which is consistent with the law of large numbers \eqref{LLN}.
\end{remark}

\begin{remark}
In the case of classical linear Hawkes process, say $\lambda_{t}=\nu+\int_{0}^{t}h(t-s)N(ds)$,
it is easy to see that $\overline{\gamma}_{0}=\nu$ and $\gamma_{n}=h$ for any $n\in\mathbb{N}$.
Thus $\Gamma(\theta)=\nu(f(\theta)-1)$, if $\theta\leq\Vert h\Vert_{L^{1}}-\log\Vert h\Vert_{L^{1}}-1$
and $\Gamma(\theta)=\infty$ otherwise,
where $f(\theta)$ is the smaller solution
of the two solutions of the equation $f(\theta)=e^{\theta+\Vert h\Vert_{L^{1}}(f(\theta)-1)}$. Then, it is easy
to check that $I(x)=\sup_{\theta\in\mathbb{R}}\{\theta x-\Gamma(\theta)\}$ gives \eqref{classicalLDP}.
More generally, for example, 
if we assume that $\gamma_{n}=h$ for odd $n\in\mathbb{N}$ and $\gamma_{n}=g$ for even
$n\in\mathbb{N}$, then, $\Gamma(\theta)=\overline{\gamma}_{0}(f(\theta)-1)$
for $\theta\leq\theta_{c}$ and $\Gamma(\theta)=\infty$ otherwise
\footnote{Let $F(x,\theta)=x-e^{\theta+\Vert h\Vert_{L^{1}}(e^{\theta+\Vert g\Vert_{L^{1}}(x-1)}-1)}$.
Note that $F(x,\theta)$ has two roots when $\theta$ is less than a critical value.
The critical value $\theta_{c}$ and $x_{c}$ are determined via
$F(x_{c},\theta_{c})=0$ and $\frac{\partial}{\partial x}F(x_{c},\theta_{c})=0$, 
which implies that $x_{c}=e^{\theta_{c}+\Vert h\Vert_{L^{1}}(e^{\theta_{c}+\Vert g\Vert_{L^{1}}(x_{c}-1)}-1)}$
and $1=\Vert h\Vert_{L^{1}}\Vert g\Vert_{L^{1}}e^{\theta_{c}+\Vert g\Vert_{L^{1}}(x_{c}-1)}x_{c}$.
The second identity gives an expression of $\theta_{c}$ in terms of $x_{c}$ and
substitute into the first identity it gives an equation that determines $x_{c}$.
To see $\Vert h\Vert_{L^{1}}\Vert g\Vert_{L^{1}}x_{c}^{2}=e^{-\Vert g\Vert_{L^{1}}(x_{c}-1)+\frac{1}{\Vert g\Vert_{L^{1}}x_{c}}-\Vert h\Vert_{L^{1}}}$ has a unique solution greater than $1$, we notice that LHS of this equation is increasing in $x_{c}$ 
and RHS is decreasing in $x_{c}$, and LHS increases to $\infty$ as $x_{c}\uparrow\infty$
and RHS decreases to $0$ as $x_{c}\uparrow\infty$. Moreover at $1$,  
$\Vert h\Vert_{L^{1}}\Vert g\Vert_{L^{1}}<e^{\frac{1}{\Vert g\Vert_{L^{1}}}-\Vert h\Vert_{L^{1}}}$
since $\Vert h\Vert_{L^{1}}e^{\Vert h\Vert_{L^{1}}}<\frac{1}{\Vert g\Vert_{L^{1}}}e^{\frac{1}{\Vert g\Vert_{L^{1}}}}$
since $\Vert h\Vert_{L^{1}}<1<\frac{1}{\Vert g\Vert_{L^{1}}}$.}, 
where $\theta_{c}=-\Vert g\Vert_{L^{1}}(x_{c}-1)+\log(\frac{1}{\Vert h\Vert_{L^{1}}\Vert g\Vert_{L^{1}}x_{c}})$,
and $x_{c}$ is the unique value greater than $1$ that satisfies
$\Vert h\Vert_{L^{1}}\Vert g\Vert_{L^{1}}x_{c}^{2}=e^{-\Vert g\Vert_{L^{1}}(x_{c}-1)+\frac{1}{\Vert g\Vert_{L^{1}}x_{c}}-\Vert h\Vert_{L^{1}}}$, and
$f(\theta)$ is the smaller solution that satisfies
\begin{equation}
f(\theta)=e^{\theta+\Vert h\Vert_{L^{1}}(e^{\theta+\Vert g\Vert_{L^{1}}(f(\theta)-1)}-1)}.
\end{equation}
\end{remark}

\begin{proof}[Proof of Theorem \ref{LogLimit}]
For any $M\in\mathbb{N}$, $\theta\in\mathbb{R}$, and continuous deterministic function $G(s)$, $0\leq s\leq t$,
\begin{align}
&\mathbb{E}\left[e^{\int_{0}^{t}G(t-s)N^{M}(ds)+\theta\sum_{n=0}^{M-1}N^{n}_{t}}\right]
\\
&=\mathbb{E}\left[\mathbb{E}\left[e^{\int_{0}^{t}G(t-s)N^{M}(ds)}\big|N^{0},N^{1},\ldots,N^{M-1}\right]e^{\theta\sum_{n=0}^{M-1}N^{n}_{t}}\right]
\nonumber
\\
&=\mathbb{E}\left[e^{\int_{0}^{t}(e^{G(t-s)}-1)\lambda^{M}_{s}ds}e^{\theta\sum_{n=0}^{M-1}N^{n}_{t}}\right]
\nonumber
\\
&=\mathbb{E}\left[e^{\int_{0}^{t}(e^{G(t-s)}-1)\int_{0}^{s}\gamma_{M}(s-u)N^{M-1}(du)ds}e^{\theta\sum_{n=0}^{M-1}N^{n}_{t}}\right]
\nonumber
\\
&=\mathbb{E}\left[e^{\int_{0}^{t}[\int_{u}^{t}(e^{G(t-s)}-1)\gamma_{M}(s-u)ds]N^{M-1}(du)}e^{\theta\sum_{n=0}^{M-1}N^{n}_{t}}\right]
\nonumber
\\
&=\mathbb{E}\left[e^{\int_{0}^{t}[\int_{0}^{t-u}(e^{G(t-u-s)}-1)\gamma_{M}(s)ds]N^{M-1}(du)}e^{\theta\sum_{n=0}^{M-1}N^{n}_{t}}\right]
\nonumber
\end{align}
Therefore, we have for any $M\in\mathbb{N}$ and $\theta\in\mathbb{R}$,
\begin{equation}
\mathbb{E}\left[e^{\theta\sum_{n=0}^{M}N^{n}_{t}}\right]
=e^{\int_{0}^{t}(e^{f(M,M,\Gammaset,\theta,t-s)}-1)\gamma_{0}(s)ds},
\end{equation}
where $f(\cdot,\cdot,\cdot,\cdot,\cdot)$ is defined recursively as
\begin{equation}\label{Gn}
f(n,M,\Gammaset,\theta,t)=\theta+\int_{0}^{t}(e^{f(n-1,M,\Gammaset,\theta,t-s)}-1)\gamma_{M+1-n}(s)ds,
\qquad 1\leq n\leq M-1,
\end{equation}
and $f(0,M,\Gammaset,\theta,t)=0$, where $\Gammaset$ was $(\Vert\gamma_{i}\Vert_{L^{1}})_{i\in\mathbb{N}}$.

It is easy to see that for any given $M\in\mathbb{N}$,
\begin{equation}
\lim_{t\rightarrow\infty}f(n,M,\Gammaset,\theta,t)
=:f(n,M,\Gammaset,\theta),
\end{equation}
where $f(0,M,\Gammaset,\theta)=0$ and
\begin{equation}\label{compareHawkes}
f(n,M,\Gammaset,\theta)=\theta+\Vert\gamma_{M+1-n}\Vert_{L^{1}}(e^{f(n-1,M,\Gammaset,\theta)}-1).
\end{equation}

Since for $\theta\geq 0$, $e^{\theta\sum_{n=0}^{M}N^{n}_{t}}$ is increasing in $M$
and for $\theta<0$, it is decreasing in $M$, by monotone convergence theorem,
\begin{equation}
\mathbb{E}[e^{\theta N_{t}}]=\lim_{M\rightarrow\infty}\mathbb{E}\left[e^{\theta\sum_{n=0}^{M}N^{n}_{t}}\right]
=e^{\int_{0}^{t}(e^{\lim_{M\rightarrow\infty}f(M,M,\Gammaset,\theta,t-s)}-1)\gamma_{0}(s)ds}.
\end{equation}
Since for $\theta\geq 0$, $f(M,M,\Gammaset,\theta,t)$ is increasing in both $M$ and $t$ 
and for $\theta<0$, $f(M,M,\Gammaset,\theta,t)$ is decreasing
in both $M$ and $t$, we have
\begin{equation}
\lim_{t\rightarrow\infty}\lim_{M\rightarrow\infty}f(M,M,\Gammaset,\theta,t)
=\lim_{M\rightarrow\infty}\lim_{t\rightarrow\infty}f(M,M,\Gammaset,\theta,t)=\lim_{M\rightarrow\infty}f(M,M,\Gammaset,\theta),
\end{equation}
and for any $\theta<0$, $f(M,M,\Gammaset,\theta)$ is decreasing in $M$ and 
$f_{\infty}(\Gammaset,\theta):=\lim_{M\rightarrow\infty}f(M,M,\Gammaset,\theta)$
exists. For any $\theta\geq 0$, $f(M,M,\Gammaset,\theta)$ is increasing in $M$ 
and the limit $f_{\infty}(\Gammaset,\theta):=\lim_{M\rightarrow\infty}f(M,M,\Gammaset,\theta)$ 
exists on extended positive real line $[0,\infty]$.
Hence, we conclude that
\begin{equation}
\lim_{t\rightarrow\infty}\frac{1}{t}\log\mathbb{E}[e^{\theta N_{t}}]=\overline{\gamma}_{0}(e^{f_{\infty}(\Gammaset,\theta)}-1)
\end{equation}
exists on the extended real line.
\end{proof}

\begin{theorem}\label{LDPThm}
Under Assumption \ref{MainAssumption} and $N(-\infty,0]=0$,
$\mathbb{P}(N_{t}/t\in\cdot)$ satisfies a large deviation principle with rate function
\begin{equation}
I(x):=\sup_{\theta\in\mathbb{R}}\{\theta x-\Gamma(\theta)\}.
\end{equation}
\end{theorem}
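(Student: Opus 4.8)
The plan is to deduce this from the G\"{a}rtner--Ellis theorem (see Dembo and Zeitouni \cite{Dembo}), applied to the laws of $N_t/t$ with speed $t$. Since $\mathbb{E}[e^{t\theta\cdot(N_t/t)}]=\mathbb{E}[e^{\theta N_t}]$, Theorem \ref{LogLimit} already supplies the scaled logarithmic moment generating function: for every $\theta\in\mathbb{R}$ the limit $\Gamma(\theta)=\lim_{t\to\infty}\frac1t\log\mathbb{E}[e^{\theta N_t}]$ exists in $(-\infty,+\infty]$ (finiteness from below being clear from Jensen's inequality and $\mathbb{E}[N_t]=O(t)$). It therefore remains to check the standard hypotheses: $0$ is an interior point of $\mathcal{D}_\Gamma:=\{\theta:\Gamma(\theta)<\infty\}$; $\Gamma$ is lower semicontinuous; and $\Gamma$ is essentially smooth. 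Together with exponential tightness of $(N_t/t)$, which promotes the G\"{a}rtner--Ellis compact-set upper bound to arbitrary closed sets, these yield the full LDP with good rate function $I=\Gamma^*$, which is exactly the asserted $I(x)=\sup_\theta\{\theta x-\Gamma(\theta)\}$.

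For the interior-point condition, note first that for $\theta\le 0$ an induction on the recursion $f_n(\theta)=\theta+\Vert\gamma_{M+1-n}\Vert_{L^1}(e^{f_{n-1}(\theta)}-1)$ gives $f_n(\theta)\le\theta$, hence $f_\infty(\theta)\le\theta$ and $\Gamma(\theta)\le\overline{\gamma}_0(e^\theta-1)\le 0$. For $0<\theta<\rho-1-\log\rho$ (which is positive since $\rho\in(0,1)$), the equation $c=\theta+\rho(e^c-1)$ has a finite smallest root $c(\theta)\ge\theta\ge 0$, and using $\Vert\gamma_k\Vert_{L^1}\le\rho$ and $f_n(\theta)\ge 0$ one shows inductively $f_n(\theta)\le c(\theta)$, hence $f_\infty(\theta)\le c(\theta)<\infty$ and $\Gamma(\theta)<\infty$. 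Thus $\mathcal{D}_\Gamma\supseteq(-\infty,\rho-1-\log\rho)$, so $0\in\mathrm{int}\,\mathcal{D}_\Gamma$; and since each $f_n$, hence $f_\infty$ and $\Gamma$, is convex and nondecreasing in $\theta$, $\mathcal{D}_\Gamma$ is an interval $(-\infty,\theta_c)$ or $(-\infty,\theta_c]$ with $\theta_c\in(0,\infty]$, and lower semicontinuity of $\Gamma$ follows from convexity together with the monotone-limit representation $f_\infty=\lim_M f_M$ of continuous functions. Exponential tightness is immediate from Chernoff's bound: fixing $\theta\in(0,\theta_c)$, $\frac1t\log\mathbb{P}(N_t/t\ge R)\le-\theta R+\frac1t\log\mathbb{E}[e^{\theta N_t}]\to-\theta R+\Gamma(\theta)$, which tends to $-\infty$ as $R\to\infty$, while $N_t\ge 0$ controls the left tail.

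The substantive step is essential smoothness, which consists of differentiability of $\Gamma$ on $(-\infty,\theta_c)$ and, when $\theta_c<\infty$, steepness at $\theta_c$, i.e. $\Gamma'(\theta)\to+\infty$ as $\theta\uparrow\theta_c$. For differentiability I would differentiate the recursion as in the Remark following Theorem \ref{LogLimit}; this unwinds to $\partial_\theta f_M(\theta)=\sum_{k=0}^{M}\prod_{j=1}^{k}\Vert\gamma_j\Vert_{L^1}e^{f_{M-j}(\theta)}$ (empty product equal to $1$, the $f_{M-j}$ being the intermediate iterates), and then one shows that on each compact subinterval of $(-\infty,\theta_c)$ the factors $\Vert\gamma_j\Vert_{L^1}e^{f_{M-j}(\theta)}$ are controlled well enough --- exploiting $\sup_n\Vert\gamma_n\Vert_{L^1}=\rho<1$ together with bounds on the intermediate iterates --- that the partial sums $\partial_\theta f_M$ stay bounded uniformly in $M$ and converge, making $f_\infty$ and hence $\Gamma$ continuously differentiable there (Dini's theorem simultaneously gives local uniform convergence $f_M\to f_\infty$). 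I expect this control of the derivative series, and its companion divergence at $\theta_c$, to be the main obstacle: unlike the classical linear Hawkes process --- where $f_\infty$ solves the single fixed-point equation $f=e^{\theta+\Vert h\Vert_{L^1}(f-1)}$ with an explicit square-root branch point at $\theta_c$ --- here $f_\infty$ is only a limit of compositions involving the distinct norms $\Vert\gamma_n\Vert_{L^1}$, so steepness must be extracted from the recursion directly: either $f_\infty(\theta_c^-)=+\infty$, in which case convexity of $f_\infty$ already forces $\Gamma'(\theta)\to\infty$, or $f_\infty(\theta_c^-)<\infty$ and one argues that the series representing $\partial_\theta f_\infty$ diverges as $\theta\uparrow\theta_c$. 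Once essential smoothness and lower semicontinuity are established, the G\"{a}rtner--Ellis theorem gives the LDP with good rate function $I=\Gamma^*$; that $I(x)=+\infty$ for $x<0$ follows from $N_t\ge 0$, and the Remarks above recover $I$ explicitly in the classical special cases.
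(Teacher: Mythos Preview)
Your proposal is correct and follows essentially the same route as the paper: apply the G\"{a}rtner--Ellis theorem using Theorem \ref{LogLimit}, show $0\in\mathrm{int}\,\mathcal{D}_\Gamma$ via the comparison $\theta_c\ge\rho-1-\log\rho$, obtain differentiability of $f_\infty$ on $(-\infty,\theta_c)$ by passing to the limit in the derivative recursion (the paper invokes the same monotone/uniform-convergence argument you identify with Dini), and then verify steepness. The paper's steepness argument is the terse bound $\partial_\theta f_M(\theta)\ge 1+\Vert\gamma_1\Vert_{L^1}e^{f_{M-1}(\theta)}\to\infty$, which implicitly uses the first of the two cases you isolate; your more careful case split (either $f_\infty(\theta_c^-)=+\infty$ or the derivative series diverges) is a fair description of what actually needs to be checked, and your added remarks on lower semicontinuity and exponential tightness are harmless redundancies in the standard G\"{a}rtner--Ellis framework.
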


\begin{proof}
Because we already had Theorem \ref{LogLimit}, we can apply G\"{a}rtner-Ellis theorem
to obtain the large deviation principle if we can check the essential smoothness condition.

Let us defined the set
\begin{equation}
\mathcal{D}_{\Gamma}:=\{\theta:\Gamma(\theta)<\infty\}.
\end{equation}
Note that \eqref{eq:f_infinity} only depends on the $L^1$ norms of $\gamma_n$, and recall that we assumed
$\rho:=\sup_{n\in\mathbb{N}}\int_{0}^{\infty}\gamma_{n}(t)dt<1$.
For a classical linear Hawkes process with immigration rate $\nu$ and exciting function $h(t)$ and $\rho = \Vert h\Vert_{L^{1}}<1$.
The limit $\lim_{t\rightarrow\infty}\frac{1}{t}\log\mathbb{E}[e^{\theta N_{t}}]$ exists and is finite
for any $\theta\leq\Vert h\Vert_{L^{1}}-1-\log\Vert h\Vert_{L^{1}}$.
By comparing with the classical Hawkes process
(by using $\Vert\gamma_{n}\Vert_{L^{1}}\leq\rho$ for every $n$
and \eqref{compareHawkes}), there exists some constant $\theta_{c}\geq\rho-1-\log\rho>0$ so that
for any $\theta\leq\theta_{c}$, $\Gamma(\theta)<\infty$. More precisely, let us define
$\theta_{c}:=\sup\{\theta:\Gamma(\theta)<\infty\}$.
Hence, we showed that the interior of $\mathcal{D}_{\Gamma}$ contains a nonempty neighborhood of the origin.

Next, we need to show that for any $\theta<\theta_{c}$, $\Gamma(\theta)$ is differentiable at $\theta$.
Let $S_{t}:=\sum_{n=1}^{\infty}N_{t}^{n}$. A quick look at the proof of Theorem \ref{LogLimit}
reveals that
\begin{equation}
\psi(\theta)=\lim_{t\rightarrow\infty}\mathbb{E}\left[e^{\theta S_{t}}\right]
=e^{f_{\infty}(\Gamma,\theta)}.
\end{equation}
Now note that $S_{t}$ is always positive and so is $e^{\theta S_{t}}$.
By dominated convergence theorem, $\psi(\theta)$ is twice differentiable
inside $(-\infty,\theta_{c})$ and the derivatives are
\begin{equation}
\lim_{t\rightarrow\infty}\mathbb{E}\left[S_{t}e^{\theta S_{t}}\right]
\qquad\text{and}\qquad
\lim_{t\rightarrow\infty}\mathbb{E}\left[S_{t}^{2}e^{\theta S_{t}}\right].
\end{equation}
Thus we proved the differentiability of $f_{\infty}(\Gammaset,\theta)$ and $\Gamma(\theta)$ inside
the domain.

Finally, let us prove steepness. 
Let us recall that 
\begin{equation}
f(n,M,\Gammaset,\theta)=\theta+\Vert\gamma_{M+1-n}\Vert_{L^{1}}(e^{f(n-1,M,\Gammaset,\theta)}-1),
\end{equation}
and $\Gamma(\theta)=\overline{\gamma}_{0}(e^{f_{\infty}(\Gammaset,\theta)}-1)$, 
where $f_{\infty}(\Gammaset,\theta)=\lim_{M\rightarrow\infty}f(M,M,\Gammaset,\theta)$.
For any $0<\theta<\theta_{c}$,
$f(n,M,\Gammaset,\theta)$ is increasing in $\theta$ for any $n,M\in\mathbb{N}$. Thus
\begin{equation}\label{eq:difflimit}
\frac{\partial}{\partial\theta}f(n,M,\Gammaset,\theta)
=1+\Vert\gamma_{1}\Vert_{L^{1}}e^{f(n-1,M,\Gammaset,\theta)}\frac{\partial}{\partial\theta}f(n-1,M,\Gammaset,\theta)\geq 1.
\end{equation}
Using \eqref{eq:difflimit} for $n=M-1,M$, we have
\begin{equation}
\frac{\partial}{\partial\theta}f(M,M,\Gammaset,\theta)
\geq 1+\Vert\gamma_{1}\Vert_{L^{1}}e^{f(M-1,M,\Gammaset,\theta)}\rightarrow+\infty,
\end{equation}
as $\theta\uparrow\theta_{c}$. Thus we proved steepness. The proof is complete.
\end{proof}

Let $C_{1},C_{2},\ldots$ be a sequence of real-valued i.i.d. random variables
with finite mean $\mathbb{E}[C_{1}]$ and variance $\text{Var}[C_{1}]$, independent of
the point process $N_{t}$. Fierro et al. \cite{Fierro} showed that
\begin{equation}
\frac{\sum_{i=1}^{N_{t}}C_{i}-\mathbb{E}[C_{1}]\mathbb{E}[N_{t}]}{\sqrt{t}}
\rightarrow 
N\left(0,m\text{Var}[C_{1}]+\mathbb{E}[C_{1}]\sigma^{2}\right),
\end{equation}
in distribution as $t\rightarrow\infty$.

We have the following result.
\begin{theorem}\label{CLDPThm}
Assume $N(-\infty,0]=0$ and Assumption \ref{MainAssumption}.
Further assume that $\mathbb{E}[e^{\theta C_{1}}]<\infty$ for $\theta\in(-\gamma,\gamma)$ for some $\gamma>0$.
Then, the limit $\Gamma_{C}(\theta):=\lim_{t\rightarrow\infty}
\frac{1}{t}\log\mathbb{E}[e^{\theta\sum_{i=1}^{N_{t}}C_{i}}]$ exists and
indeed $\Gamma_{C}(\theta)=\Gamma(\log\mathbb{E}[e^{\theta C_{1}}])$, where $\Gamma(\cdot)$
is defined in Theorem \ref{LogLimit}. Moreover, $\mathbb{P}(\frac{1}{t}\sum_{i=1}^{N_{t}}C_{i}\in\cdot)$
satisfies a large deviation principle with rate function
\begin{equation}\label{CRateFunction}
I_{C}=\sup_{\theta\in\mathbb{R}}\{\theta x-\Gamma_{C}(\theta)\}.
\end{equation}
\end{theorem}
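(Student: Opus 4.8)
The plan is to reduce everything to Theorem~\ref{LogLimit} by conditioning on the point process, and then to apply the G\"{a}rtner--Ellis theorem. Set $\phi(\theta):=\log\mathbb{E}[e^{\theta C_{1}}]$; by assumption $\phi$ is finite, convex and infinitely differentiable on $(-\gamma,\gamma)$, with $\phi(0)=0$ and $\phi'(0)=\mathbb{E}[C_{1}]$. Since $C_{1},C_{2},\ldots$ are i.i.d. and independent of $N$, conditioning on the whole trajectory of $N$ (in particular on $N_{t}$) gives
\begin{equation}
\mathbb{E}\left[e^{\theta\sum_{i=1}^{N_{t}}C_{i}}\,\big|\,N\right]=\left(\mathbb{E}[e^{\theta C_{1}}]\right)^{N_{t}}=e^{\phi(\theta)N_{t}},
\end{equation}
so that $\mathbb{E}[e^{\theta\sum_{i=1}^{N_{t}}C_{i}}]=\mathbb{E}[e^{\phi(\theta)N_{t}}]$, with the convention that both sides equal $+\infty$ when $\mathbb{E}[e^{\theta C_{1}}]=+\infty$ (consistent, since then the conditional expectation above is a.s. infinite on $\{N_{t}\geq1\}$). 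Dividing by $t$, taking logarithms, and invoking Theorem~\ref{LogLimit} at the real parameter $\phi(\theta)$ yields
\begin{equation}
\Gamma_{C}(\theta)=\lim_{t\to\infty}\frac{1}{t}\log\mathbb{E}\left[e^{\phi(\theta)N_{t}}\right]=\Gamma(\phi(\theta)),
\end{equation}
which exists on the extended real line; this is the first assertion.

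For the large deviation principle I would verify the hypotheses of the G\"{a}rtner--Ellis theorem for $\Gamma_{C}$. The function $\Gamma_{C}$ is convex, being a pointwise limit of convex functions (H\"{o}lder's inequality), and lower semicontinuous. The origin lies in the interior of $\mathcal{D}_{\Gamma_{C}}:=\{\theta:\Gamma_{C}(\theta)<\infty\}$: indeed $\phi$ is continuous near $0$ with $\phi(0)=0$, and in the proof of Theorem~\ref{LDPThm} it is shown that $\Gamma$ is finite on $(-\infty,\theta_{c})$ with $\theta_{c}>0$, so $\phi(\theta)<\theta_{c}$ and hence $\Gamma_{C}(\theta)<\infty$ for all $\theta$ in a neighborhood of $0$. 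Differentiability of $\Gamma_{C}$ in the interior of $\mathcal{D}_{\Gamma_{C}}$ follows from the chain rule: $\phi$ is smooth on the interior of its domain, and $\Gamma$ is differentiable on $(-\infty,\theta_{c})$ by the argument already given in the proof of Theorem~\ref{LDPThm}, so $\Gamma_{C}=\Gamma\circ\phi$ is differentiable wherever $\theta$ is interior to the domain of $\phi$ and $\phi(\theta)<\theta_{c}$.

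The step I expect to be the main obstacle is verifying steepness of $\Gamma_{C}$ at the endpoints of $\mathcal{D}_{\Gamma_{C}}$. By the chain rule $\Gamma_{C}'(\theta)=\Gamma'(\phi(\theta))\,\phi'(\theta)$. Such an endpoint is reached in one of two ways. Either $\phi(\theta)$ increases to $\theta_{c}$ while $\theta$ stays interior to the domain of $\phi$; since $\phi$ is convex with value $\theta_{c}>0=\phi(0)$ at such an endpoint while its minimum is $\leq0$, $\phi$ is strictly monotone there and $|\phi'|$ stays bounded away from $0$, while $|\Gamma'(\phi(\theta))|\to\infty$ by the steepness of $\Gamma$ established in Theorem~\ref{LDPThm}; hence $|\Gamma_{C}'(\theta)|\to\infty$. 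Or the endpoint of $\mathcal{D}_{\Gamma_{C}}$ coincides with an endpoint of the domain of $\phi$, in which case steepness of $\Gamma_{C}$ is inherited from that of $\phi$ there, multiplied by the finite non-degenerate factor $\Gamma'(\phi(\theta))$. Once essential smoothness is in hand, the G\"{a}rtner--Ellis theorem gives the large deviation principle for $\mathbb{P}(\frac{1}{t}\sum_{i=1}^{N_{t}}C_{i}\in\cdot)$ with the Legendre transform rate function, which is exactly \eqref{CRateFunction}. Alternatively, one may avoid a detailed discussion of the endpoints of the domain for the lower bound by combining the large deviation principle for $N_{t}/t$ from Theorem~\ref{LDPThm} with Cram\'{e}r's theorem for the i.i.d. sequence $(C_{i})$ via a standard composition argument.
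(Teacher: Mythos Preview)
Your approach is essentially the same as the paper's: condition on $N_{t}$ to obtain $\mathbb{E}[e^{\theta\sum_{i}C_{i}}]=\mathbb{E}[e^{\phi(\theta)N_{t}}]$, invoke Theorem~\ref{LogLimit} to identify $\Gamma_{C}(\theta)=\Gamma(\phi(\theta))$, and then appeal to G\"{a}rtner--Ellis. The paper's proof is in fact briefer than yours on the second step; after the conditioning computation it simply writes ``Following the proof of Theorem~\ref{LDPThm}, we conclude \ldots'' without spelling out the essential smoothness verification.

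Your more detailed discussion of steepness is a genuine addition, and your instinct that this is the delicate point is correct. One remark: in your Case~2 (the boundary of $\mathcal{D}_{\Gamma_{C}}$ coincides with a boundary of the domain of $\phi$) you assert that steepness is ``inherited from that of $\phi$ there,'' but $\phi$ need not be steep at the edge of its domain under the stated hypothesis $\mathbb{E}[e^{\theta C_{1}}]<\infty$ on $(-\gamma,\gamma)$ alone (e.g.\ tails like $e^{-\gamma x}/x^{3}$). The paper does not address this either, so your proof is at least as complete as the published one; your suggested alternative route via the LDP for $N_{t}/t$ combined with Cram\'{e}r is a reasonable way to patch the lower bound if one wants to be fully rigorous.
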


\begin{proof}
For any $\theta\in\mathbb{R}$ so that $\mathbb{E}\left[e^{\theta\sum_{i=1}^{N_{t}}C_{i}}\right]<\infty$, we have
\begin{align}
\mathbb{E}\left[e^{\theta\sum_{i=1}^{N_{t}}C_{i}}\right]
&=\mathbb{E}\left[\mathbb{E}\left[e^{\theta\sum_{i=1}^{N_{t}}C_{i}}|N_{t}\right]\right]
\\
&=\sum_{k=0}^{\infty}\mathbb{E}[e^{\theta\sum_{i=1}^{k}C_{i}}]\mathbb{P}(N_{t}=k)
\nonumber
\\
&=\sum_{k=0}^{\infty}e^{k\log\mathbb{E}[e^{\theta C_{1}}]}\mathbb{P}(N_{t}=k)
\nonumber
\\
&=\mathbb{E}\left[e^{\log\mathbb{E}[e^{\theta C_{1}}]N_{t}}\right].
\nonumber
\end{align}
Hence, by Theorem \ref{LogLimit}, 
we get $\Gamma_{C}(\theta):=\lim_{t\rightarrow\infty}\frac{1}{t}\log\mathbb{E}[e^{\theta\sum_{i=1}^{N_{t}}C_{i}}]
=\Gamma(\log\mathbb{E}[e^{\theta C_{1}}])$. Following the proof of Theorem \ref{LDPThm},
we conclude that $\mathbb{P}(\frac{1}{t}\sum_{i=1}^{N_{t}}C_{i}\in\cdot)$
satisfies a large deviation principle with rate function $I_{C}(x)$ given by \eqref{CRateFunction}.
\end{proof}

Let $X_{1},\ldots,X_{n}$ be a sequence of real-valued i.i.d. random variables with mean $0$ and variance $\sigma^{2}$. 
Assume that $\mathbb{E}[e^{\theta X_{1}}]<\infty$ for $\theta$ in some ball around
the origin. For any $\sqrt{n}\ll a_{n}\ll n$, a moderate deviation principle says that for any Borel set $A$,
\begin{align}
-\inf_{x\in A^{o}}\frac{x^{2}}{2\sigma^{2}}
&\leq\liminf_{n\rightarrow\infty}\frac{n}{a_{n}^{2}}\log\mathbb{P}\left(\frac{1}{a_{n}}\sum_{i=1}^{n}X_{i}\in A\right)
\\
&\leq\limsup_{n\rightarrow\infty}\frac{n}{a_{n}^{2}}\log\mathbb{P}\left(\frac{1}{a_{n}}\sum_{i=1}^{n}X_{i}\in A\right)
\leq-\inf_{x\in A^{o}}\frac{x^{2}}{2\sigma^{2}}.\nonumber
\end{align}
In other words, $\mathbb{P}(\frac{1}{a_{n}}\sum_{i=1}^{n}X_{i}\in\cdot)$ satisfies a large deviation principle with the speed $\frac{a_{n}^{2}}{n}$.
The above classical result can be found for example in Dembo and Zeitouni \cite{Dembo}.
Moderate deviation principle fills in the gap between central limit theorem and large deviation principle.

The moderate deviation principle for classical linear Hawkes process has been studied in Zhu \cite{ZhuMDP}.
For the remaining of this section, let us prove the moderate deviation principle for the Hawkes process
with different exciting functions.

\begin{theorem}\label{MDPThm}
Assume that Assumptions \ref{MainAssumption} and \ref{AssumptionTwo} hold.
For any Borel set $A$ and time sequence $a(t)$ such that $\sqrt{t}\ll a(t)\ll t$, we have
the following moderate deviation principle.
\begin{align}
-\inf_{x\in A^{o}}J(x)&\leq\liminf_{t\rightarrow\infty}\frac{t}{a(t)^{2}}
\log\mathbb{P}\left(\frac{N_{t}-mt}{a(t)}\in A\right)
\\
&\leq\limsup_{t\rightarrow\infty}\frac{t}{a(t)^{2}}
\log\mathbb{P}\left(\frac{N_{t}-mt}{a(t)}\in A\right)
\leq-\inf_{x\in\overline{A}}J(x),\nonumber
\end{align}
where $J(x)=\frac{x^{2}}{2\sigma^{2}}$ and $\sigma^{2}=\sum_{j=0}^{\infty}\left(1+\sum_{p=1}^{\infty}\prod_{i=j+1}^{p+j}\int_{0}^{\infty}\gamma_{i}(u)du\right)^{2}m_{j}$.
\end{theorem}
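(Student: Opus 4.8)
The plan is to establish the moderate deviation principle via the standard Gärtner–Ellis route at the moderate scale: I would show that for every $\lambda\in\mathbb{R}$,
\begin{equation}
\lim_{t\to\infty}\frac{t}{a(t)^{2}}\log\mathbb{E}\!\left[\exp\!\left(\frac{\lambda a(t)}{t}\,(N_{t}-mt)\right)\right]=\frac{\sigma^{2}\lambda^{2}}{2},\nonumber
\end{equation}
and then invoke the Gärtner–Ellis theorem (the limiting log-moment generating function $\lambda\mapsto\sigma^{2}\lambda^{2}/2$ is finite and differentiable everywhere, hence essentially smooth), whose Legendre transform is exactly $J(x)=x^{2}/(2\sigma^{2})$. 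The key input is Theorem \ref{LogLimit}: I already know $\Gamma(\theta)=\lim_{t\to\infty}\frac1t\log\mathbb{E}[e^{\theta N_{t}}]=\overline{\gamma}_{0}(e^{f_{\infty}(\theta)}-1)$ exists and is finite in a neighborhood of $0$, with $\Gamma(0)=0$, $\Gamma'(0)=m$, and $\Gamma''(0)=\sigma^{2}$ (the latter obtained by differentiating the recursion for $f_{M}$ twice and passing to the limit, exactly as in the first Remark after Theorem \ref{LogLimit} but carried to second order).

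The heart of the argument is a careful Taylor expansion with the scaling $\theta=\theta_{t}:=\lambda a(t)/t\to 0$. Since $\sqrt{t}\ll a(t)\ll t$, we have $t\theta_{t}^{2}=\lambda^{2}a(t)^{2}/t\to\infty$ while $\theta_{t}\to 0$, so we need the expansion
\begin{equation}
\frac1t\log\mathbb{E}[e^{\theta_{t}N_{t}}]=\Gamma(\theta_{t})+\text{(error)}=m\theta_{t}+\tfrac12\sigma^{2}\theta_{t}^{2}+o(\theta_{t}^{2})+\text{(error)},\nonumber
\end{equation}
and I must control the error term $\frac1t\log\mathbb{E}[e^{\theta_{t}N_{t}}]-\Gamma(\theta_{t})$ uniformly: multiplying through by $t/a(t)^{2}$, the linear term $m\theta_{t}t/a(t)^{2}=m\lambda/a(t)\to 0$, the quadratic term gives $\frac12\sigma^{2}\lambda^{2}$, and the $o(\theta_{t}^{2})$ term vanishes. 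So the real obstacle is showing $\frac{t}{a(t)^{2}}\big(\frac1t\log\mathbb{E}[e^{\theta_{t}N_{t}}]-\Gamma(\theta_{t})\big)\to 0$. This is where Assumption \ref{AssumptionTwo} enters: the two conditions there — that $\frac1t\int_0^t\gamma_0(s)ds-\overline{\gamma}_0$ decays faster than $t^{-1/2}$, and that the tail $\sqrt{t}\int_t^\infty\sum_p\gamma_p\ast\cdots\ast\gamma_1(s)ds\to 0$ — are precisely what is needed in Fierro et al.'s CLT; I expect the analogous (slightly strengthened, or uniform-in-a-neighborhood) estimates on $F_M(t)\to f_M(\theta)$ and $f_M\to f_\infty$ to yield that $\frac1t\log\mathbb{E}[e^{\theta N_t}]-\Gamma(\theta)$ is $O(1/t)$ uniformly for $\theta$ in a fixed neighborhood of $0$, which more than suffices since $t/a(t)^2\cdot O(1/t)=O(1/a(t)^2)\to 0$.

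Concretely, the steps in order are: (1) recall from Theorem \ref{LogLimit} the explicit form $\mathbb{E}[e^{\theta N_t}]=\exp(\int_0^t(e^{F_\infty(t-s)}-1)\gamma_0(s)\,ds)$ where $F_\infty(t)=\lim_M F_M(t)$, and split $\frac1t\log\mathbb{E}[e^{\theta N_t}]$ into a main term $\overline{\gamma}_0(e^{f_\infty(\theta)}-1)$ plus remainders coming from (a) replacing $\frac1t\int_0^t\gamma_0$ by $\overline{\gamma}_0$ and (b) replacing $F_\infty(t-s)$ by its limit $f_\infty(\theta)$, i.e. the convolution-tail term; (2) using Assumption \ref{AssumptionTwo}, show each remainder is $o(a(t)^2/t)$ after dividing — here the dominated-convergence/summability bounds from the proof of Theorem \ref{ergodicity} (the $\rho<1$ geometric control $\overline{\gamma}_0 n\rho^n$) give the uniformity in $\theta$ near $0$; (3) Taylor-expand $\Gamma(\theta_t)$ to second order using $\Gamma(0)=0,\Gamma'(0)=m,\Gamma''(0)=\sigma^2$ and continuity of $\Gamma''$ near $0$; (4) assemble to get the limit $\sigma^2\lambda^2/2$ for the scaled log-MGF of $(N_t-mt)/1$ at argument $\theta_t$, equivalently the scaled log-MGF of $(N_t-mt)/a(t)$ at argument $\lambda$; (5) apply Gärtner–Ellis (as in the proof of Theorem \ref{LDPThm}, but now trivially since the rate function is a finite smooth quadratic) to conclude the stated moderate deviation bounds with $J(x)=x^2/(2\sigma^2)$. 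The main obstacle is step (2): turning the two scalar conditions in Assumption \ref{AssumptionTwo} into the required uniform (in $\theta$) rate of convergence of the finite-$t$ log-MGF to $\Gamma$, which I expect to follow by differentiating/Lipschitz-bounding the maps $\theta\mapsto F_M(t,\theta)$ and $\theta\mapsto f_M(\theta)$ on a compact neighborhood of $0$ and invoking the geometric summability already exploited earlier in the paper.
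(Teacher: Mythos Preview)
Your overall strategy is correct and matches the paper's: compute the scaled log-MGF limit $\frac{t}{a(t)^{2}}\log\mathbb{E}[e^{\theta_t(N_t-mt)}]\to\tfrac12\sigma^2\lambda^2$ (with $\theta_t=\lambda a(t)/t$) and apply G\"artner--Ellis. However, your step (2) has a genuine gap. You propose to write $L_t(\theta_t)-m\theta_t=[L_t(\theta_t)-\Gamma(\theta_t)]+[\Gamma(\theta_t)-m\theta_t]$ with $L_t(\theta)=\tfrac1t\log\mathbb{E}[e^{\theta N_t}]$, Taylor-expand the second bracket, and bound the first bracket uniformly by $O(1/t)$. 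But Assumption~\ref{AssumptionTwo} only gives $o(1/\sqrt{t})$ decay for the relevant quantities (the $\gamma_0$-average and the convolution tail), so at a \emph{fixed} $\theta\neq 0$ you cannot expect better than $L_t(\theta)-\Gamma(\theta)=o(1/\sqrt{t})$. After the correct scaling (you need $t^2/a(t)^2$, not $t/a(t)^2$, since the centered log-MGF is $t[L_t(\theta_t)-m\theta_t]$), this yields $\tfrac{t^2}{a(t)^2}\cdot o(1/\sqrt{t})=o(t^{3/2}/a(t)^2)$, which does \emph{not} vanish for $a(t)$ close to $\sqrt{t}$ (e.g.\ $a(t)=t^{1/2+\epsilon}$ with small $\epsilon$). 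So factoring through the infinite-time limit $\Gamma$ with a $\theta$-uniform remainder cannot cover the full moderate range.

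The paper avoids this by never passing through $\Gamma(\theta)$: it Taylor-expands the \emph{finite-time} building blocks $G_n(s,\theta,M)$ directly in $\theta$, writing $G_n(s,\theta_t,M)=C_n^{1}(s,M)\theta_t+C_n^{2}(s,M)\theta_t^{2}+O(\theta_t^{3})$ with the remainder constant independent of $s,n,M,t$ (this is Lemmas~\ref{MDPLemma0}--\ref{MDPLemma4}, established by induction on $n$ using the recursion \eqref{Gn} and a comparison with the fixed-point equation $x=\theta+\rho(e^x-1)$). Then the scaled log-MGF splits as $I_1+I_2+I_3$: the linear piece $I_1$ becomes $\tfrac{1}{a(t)}[\int_0^t C_1^1(t-r,\infty)\gamma_0(r)\,dr-mt]$, and \emph{this} is where Assumption~\ref{AssumptionTwo} is used --- the bracket is $o(\sqrt{t})$ divided by $a(t)\gg\sqrt{t}$; the quadratic piece $I_2$ converges to $\tfrac12\sigma^2\lambda^2$ by dominated convergence (uniform bounds on $C_n^1,C_n^2$ from Lemma~\ref{MDPLemma3}); the cubic remainder $I_3$ is $O(a(t)/t)\to 0$. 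In effect this is a Taylor expansion of $L_t(\theta_t)$ in $\theta_t$ with a third-order remainder \emph{uniform in $t$}, which is exactly what your detour through $\Gamma$ fails to provide. Your last paragraph's fallback (``differentiating/Lipschitz-bounding the maps $\theta\mapsto F_M(t,\theta)$'') is the right instinct and is essentially what the paper's four preparatory lemmas carry out, but this is the substantive work, not a consequence of estimates ``already exploited earlier.''
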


Before we proceed to the proof of Theorem \ref{MDPThm}, let us prove a series of lemmas.

\begin{lemma}\label{MDPLemma0}
Consider the equation
\begin{equation}\label{minsolution}
x=\theta+(e^{x}-1)\rho.
\end{equation}
The equation has two distinct solutions if $\theta<\rho-1-\log\rho$
and has one solution if $\theta=\rho-1-\log\rho$.
Let $x(\theta)$ be the minimal solution of \eqref{minsolution} if the solutions exist.
Then, $x(\theta)\leq 0$ if $\theta\leq 0$ and $x(\theta)\geq 0$ if $\theta\geq 0$.
\end{lemma}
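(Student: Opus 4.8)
The plan is to analyze the zeros of the function
$g(x) := \theta + (e^{x}-1)\rho - x$, whose roots are exactly the solutions of \eqref{minsolution}, using elementary convexity. First I would compute $g'(x) = \rho e^{x} - 1$ and $g''(x) = \rho e^{x} > 0$, so $g$ is strictly convex on $\mathbb{R}$ with a unique minimizer at $x^{\ast} := \log(1/\rho) = -\log\rho$, which is strictly positive since $\rho \in (0,1)$. Evaluating at the minimizer gives $g(x^{\ast}) = \theta + (1-\rho) + \log\rho = \theta - (\rho - 1 - \log\rho)$, and a one-line check (the map $\rho \mapsto \rho - 1 - \log\rho$ vanishes at $\rho = 1$ and has derivative $1 - 1/\rho < 0$ on $(0,1)$, hence is positive there) shows $\rho - 1 - \log\rho > 0$.

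Next, since $e^{x}$ dominates as $x \to +\infty$ and $-x$ dominates as $x \to -\infty$, we have $g(x) \to +\infty$ in both directions. Combined with strict convexity, this yields the trichotomy stated in the lemma: $g$ has exactly two zeros when $g(x^{\ast}) < 0$, i.e.\ $\theta < \rho - 1 - \log\rho$; exactly one zero, namely $x^{\ast}$, when $g(x^{\ast}) = 0$, i.e.\ $\theta = \rho - 1 - \log\rho$; and no zero otherwise. In the two-zero case write $x_{-}(\theta) < x^{\ast} < x_{+}(\theta)$ for the roots, so that $x(\theta) = x_{-}(\theta)$ is the minimal solution, and record that $g > 0$ on $(-\infty, x_{-})$, $g < 0$ on $(x_{-}, x_{+})$, and $g > 0$ on $(x_{+}, \infty)$.

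For the sign assertion I would simply test $g$ at $0$, where $g(0) = \theta$. If $\theta < 0$, then $g(0) < 0$ forces $0 \in (x_{-}, x_{+})$, hence $x(\theta) = x_{-} < 0$. If $\theta = 0$, then $g(0) = 0$, so $0$ is one of the two roots; since the other root exceeds $x^{\ast} > 0$, the minimal one is $0$, i.e.\ $x(0) = 0$. If $\theta > 0$ and solutions exist (so $\theta \le \rho - 1 - \log\rho$): when $\theta < \rho - 1 - \log\rho$ the inequality $g(0) > 0$ puts $0$ outside $[x_{-}, x_{+}]$, and since $x_{+} > x^{\ast} > 0$ the only possibility is $0 < x_{-}$, giving $x(\theta) = x_{-} > 0$; when $\theta = \rho - 1 - \log\rho$ the unique solution is $x^{\ast} > 0$. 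This exhausts all cases.

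There is no genuine obstacle here—the argument is routine convex analysis—so the only point requiring care is the bookkeeping: keeping straight that the minimal root is the one lying to the left of the minimizer $x^{\ast} = -\log\rho$, and correctly locating the point $0$ relative to the pair of roots in each sign regime.
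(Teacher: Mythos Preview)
Your proof is correct and follows essentially the same approach as the paper: the paper works with $F(x)=x-\theta-(e^{x}-1)\rho=-g(x)$, which is strictly concave with maximum at $\log(1/\rho)$, and derives the same trichotomy and sign conclusions by evaluating $F$ at $0$ and at the critical point. Your version is slightly more thorough in treating the boundary cases $\theta=0$ and $\theta=\rho-1-\log\rho$ explicitly, but the argument is otherwise identical up to the sign flip of the auxiliary function.
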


\begin{proof}
Let $F(x):=x-\theta-(e^{x}-1)\rho$. It is easy to compute that $F'(x)=1-\rho e^{x}$
and $F''(x)=-\rho e^{x}$. Thus, $F(x)$ is strictly concave and its maximum is
achieved at $x=\log(1/\rho)$. Therefore, the equation \eqref{minsolution}
has no solutions if $F(\log(1/\rho))<0$, has one solution if $F(\log(1/\rho))=0$
and has two solutions if $F(\log(1/\rho))>0$. It is easy to check
that $F(\log(1/\rho))=-\theta-\log\rho+\rho-1$. Now assume that $\theta\leq\rho-1-\log\rho$
so that \eqref{minsolution} has solutions. Observe that if $\theta\geq 0$, then $F(0)=-\theta<0$
and $F(\log(1/\rho))\geq 0$, where $\log(1/\rho)>0$, thus $x(\theta)\geq 0$.
Similarly $x(\theta)\leq 0$ when $\theta\leq 0$.
\end{proof}

\begin{lemma}\label{MDPLemma1}
Given $0\leq\theta\leq\rho-1-\log\rho$ and $x(\theta)$ as in lemma \ref{MDPLemma0}, if $f(t,\theta)\leq x(\theta)$ for any $t\geq 0$, 
then $H_{n}(t,\theta)\leq x(\theta)$
uniformly in $t\geq 0$ and $n\in\mathbb{N}$, where
\begin{equation}
H_{n}(t,\theta):=e^{\theta+\int_{0}^{t}(e^{f(t-s,\theta)}-1)\gamma_{n}(s)ds},
\qquad t\geq 0.
\end{equation}
Similarly, given $\theta\leq 0$, if $f(t,\theta)\geq x(\theta)$ for any $t\geq 0$,
then $H_{n}(t,\theta)\geq x(\theta)$.
\end{lemma}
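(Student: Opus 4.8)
The plan is a direct comparison with the fixed-point equation \eqref{minsolution}, using monotonicity of the exponential together with the uniform bound $\int_{0}^{\infty}\gamma_{n}(s)\,ds\le\rho$ from Assumption \ref{MainAssumption}(ii). The two assertions are mirror images of each other, and in each the point is that the sign of $e^{x(\theta)}-1$ is pinned down by the sign of $\theta$ through Lemma \ref{MDPLemma0}.

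First I would treat the case $0\le\theta\le\rho-1-\log\rho$ with $f(t,\theta)\le x(\theta)$ for all $t\ge 0$. By Lemma \ref{MDPLemma0} we have $x(\theta)\ge 0$, hence $e^{x(\theta)}-1\ge 0$. For any $t\ge 0$ and $s\in[0,t]$, monotonicity of $u\mapsto e^{u}$ gives $e^{f(t-s,\theta)}-1\le e^{x(\theta)}-1$; since $\gamma_{n}\ge 0$ and $e^{x(\theta)}-1\ge 0$, integrating and using $\int_{0}^{t}\gamma_{n}(s)\,ds\le\int_{0}^{\infty}\gamma_{n}(s)\,ds\le\rho$ yields
\[
\theta+\int_{0}^{t}(e^{f(t-s,\theta)}-1)\gamma_{n}(s)\,ds\le\theta+(e^{x(\theta)}-1)\rho=x(\theta),
\]
where the last equality is the defining relation \eqref{minsolution} of $x(\theta)$. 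Thus the exponent defining $F_{n}(t,\theta)$ is at most $x(\theta)$, and since every bound used is independent of $t$ and of $n\in\mathbb{N}$, this gives $F_{n}(t,\theta)\le x(\theta)$ uniformly.

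For the second case, $\theta\le 0$ and $f(t,\theta)\ge x(\theta)$ for all $t\ge 0$, Lemma \ref{MDPLemma0} gives $x(\theta)\le 0$, hence $e^{x(\theta)}-1\le 0$. Now $e^{f(t-s,\theta)}-1$ need not be sign-definite, but from $f(t-s,\theta)\ge x(\theta)$ we still have $e^{f(t-s,\theta)}-1\ge e^{x(\theta)}-1$, and therefore
\[
\int_{0}^{t}(e^{f(t-s,\theta)}-1)\gamma_{n}(s)\,ds\ge(e^{x(\theta)}-1)\int_{0}^{t}\gamma_{n}(s)\,ds\ge(e^{x(\theta)}-1)\rho,
\]
the last step being valid precisely because $e^{x(\theta)}-1\le 0$ while $\int_{0}^{t}\gamma_{n}(s)\,ds\le\rho$. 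Adding $\theta$ and invoking \eqref{minsolution} once more gives the reversed bound $F_{n}(t,\theta)\ge x(\theta)$, again uniformly in $t\ge0$ and $n\in\mathbb{N}$.

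The computations are entirely routine; the only step that demands care is the bookkeeping of inequality directions in the $\theta\le 0$ case, where one must observe that even though $e^{f}-1$ is not sign-definite there, the multiplicative estimate $(e^{x(\theta)}-1)\int_{0}^{t}\gamma_{n}\ge(e^{x(\theta)}-1)\rho$ goes the right way exactly because $e^{x(\theta)}-1\le 0$, which is what Lemma \ref{MDPLemma0} supplies. I do not anticipate any genuine obstacle beyond keeping these signs straight.
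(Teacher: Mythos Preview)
Your argument is correct and matches the paper's line for line: bound $e^{f}-1$ by $e^{x(\theta)}-1$ via monotonicity, replace $\int_{0}^{t}\gamma_{n}$ by $\rho$ using the sign of $e^{x(\theta)}-1$ supplied by Lemma~\ref{MDPLemma0}, and invoke the fixed-point identity \eqref{minsolution}. (Both your write-up and the paper's share a harmless slip stemming from an apparent typo in the statement: since $F_{n}$ is defined with an outer exponential, the chain of inequalities actually yields $F_{n}\le e^{x(\theta)}$ rather than $F_{n}\le x(\theta)$; what is really needed---and what Lemma~\ref{MDPLemma2} actually uses---is the bound on the exponent, which you have established.)
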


\begin{proof}
Let us first assume that $0\leq\theta\leq\rho-1-\log\rho$.
By the definition of $x(\theta)$ and the assumption $f(t,\theta)\leq x(\theta)$ for any $t\geq 0$, 
it is easy to see that
\begin{align}
H_{n}(t,\theta)&\leq e^{\theta+\int_{0}^{t}(e^{x(\theta)}-1)\gamma_{n}(s)ds}
\\
&\leq e^{\theta+(e^{x(\theta)}-1)\int_{0}^{t}\gamma_{n}(s)ds}
\nonumber
\\
&\leq e^{\theta+(e^{x(\theta)}-1)\Vert\gamma_{n}\Vert_{L^{1}}}
\nonumber
\\
&\leq e^{\theta+(e^{x(\theta)}-1)\rho}
\nonumber
\\
&=x(\theta),
\nonumber
\end{align}
where we used the fact that $x(\theta)\geq 0$ for $\theta\geq 0$ in Lemma \ref{MDPLemma0}.
Similarly, one can show that, given $\theta\leq 0$, if $f(t,\theta)\geq x(\theta)$ for any $t\geq 0$,
then $H_{n}(t,\theta)\geq x(\theta)$.
\end{proof}

\begin{lemma}\label{MDPLemma2}
For any fixed $\theta$ and $a(t)$ as in Theorem \ref{MDPThm}, there is some $k_{1}\geq\frac{1}{1-\rho}$ so that
for any sufficiently large $t$,
\begin{equation}
\left|f\left(n,M,\Gammaset,\frac{a(t)}{t}\theta,s\right)\right|\leq k_{1}\frac{a(t)}{t}|\theta|,
\end{equation}
uniformly for $1\leq n\leq M$, $M\in\mathbb{N}$ and $s\geq 0$, where $f(\cdot,\cdot,\cdot,\cdot,\cdot)$ was defined in \eqref{Gn}.
\end{lemma}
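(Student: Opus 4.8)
The plan is to exploit that $a(t)/t\to 0$ as $t\to\infty$ (since $a(t)\ll t$), so that the ``$\theta$-argument'' $\theta_t:=\frac{a(t)}{t}\theta$ fed into $G_n$ is arbitrarily small; on this scale the recursion \eqref{Gn} behaves like its linearization $x\mapsto\theta_t+\rho x$, whose fixed point $\frac{\theta_t}{1-\rho}$ is precisely the origin of the constant $\frac{1}{1-\rho}$ in the statement (compare also the minimal root $x(\theta)$ of \eqref{minsolution} in Lemma \ref{MDPLemma0}, which is asymptotically $\theta/(1-\rho)$ as $\theta\to 0$). Accordingly, fix any $k_1>\frac{1}{1-\rho}$. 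Since $\rho<1$ this forces $k_1(1-\rho)>1$, equivalently $\frac{k_1-1}{k_1\rho}>1$, so one may also fix a constant $C_0$ with $1<C_0<\frac{k_1-1}{k_1\rho}$, i.e. $1+C_0k_1\rho\le k_1$. Note that $k_1$ and $C_0$ depend only on $\rho$; only the threshold on $t$ below depends on $\theta$.

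Since $\frac{|e^{x}-1|}{|x|}\to 1$ as $x\to 0$, our choice $C_0>1$ gives some $\delta_0>0$ with $|e^{x}-1|\le C_0|x|$ whenever $|x|\le\delta_0$. Because $\frac{a(t)}{t}\to 0$, for all $t$ sufficiently large we have $k_1\frac{a(t)}{t}|\theta|\le\delta_0$; fix such a $t$ and abbreviate $\epsilon:=\theta_t=\frac{a(t)}{t}\theta$. I then claim that $\sup_{s\ge 0}|G_n(s,\epsilon,M)|\le k_1|\epsilon|$ for every $M\in\mathbb{N}$ and every $n$ with $1\le n\le M$, which I prove by downward induction on $n$. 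For $n=M$ it is immediate, since $G_M(s,\epsilon,M)=\epsilon$ and $k_1\ge 1$. Assuming the bound for $n+1$ with $1\le n\le M-1$, \eqref{Gn} gives, for every $s\ge 0$,
\[
|G_n(s,\epsilon,M)|\le|\epsilon|+\int_0^s\bigl|e^{G_{n+1}(s-u,\epsilon,M)}-1\bigr|\,\gamma_n(u)\,du\le|\epsilon|+C_0k_1|\epsilon|\int_0^s\gamma_n(u)\,du\le|\epsilon|\,(1+C_0k_1\rho)\le k_1|\epsilon|,
\]
where I used the induction hypothesis together with $k_1|\epsilon|\le\delta_0$ to linearize the exponential, then $\Vert\gamma_n\Vert_{L^1}\le\rho$, and finally the choice of $C_0$ and $k_1$. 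Rewriting $\epsilon=\frac{a(t)}{t}\theta$, this is exactly the asserted bound, uniformly in $1\le n\le M$, $M\in\mathbb{N}$ and $s\ge 0$.

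The estimate itself is an elementary a priori bound of contraction type, and the only point requiring care — the step I would single out as the main thing to get right — is the order of the quantifiers: the constants $k_1$, $C_0$, $\delta_0$ must be fixed before $t$, and one must observe that the sole constraint on $t$, namely $k_1\frac{a(t)}{t}|\theta|\le\delta_0$, is a single condition independent of $n$ and $M$, so that the downward induction on $n$ runs with the same constants at every stage and no circularity arises between ``$t$ large'' and the inductive bound.
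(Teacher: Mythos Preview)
Your argument is correct. It differs from the paper's route in a useful way: the paper first invokes Lemma~\ref{MDPLemma0} to introduce the minimal fixed point $x(\theta)$ of $x=\theta+(e^{x}-1)\rho$, then applies Lemma~\ref{MDPLemma1} (iteratively, splitting into the cases $\theta\ge 0$ and $\theta\le 0$) to get $G_n\le x(\theta)$, respectively $G_n\ge x(\theta)$, and finally uses $x(0)=0$, $x'(0)=\frac{1}{1-\rho}$ to linearize $x(\theta)$ near~$0$. You bypass both lemmas by performing a direct downward induction on $n$ with the elementary bound $|e^{x}-1|\le C_0|x|$ for $|x|\le\delta_0$, handling both signs of $\theta$ at once via absolute values. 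Your version is more self-contained and makes the contraction structure $x\mapsto\theta_t+\rho x$ of the linearized recursion fully explicit; the paper's version has the conceptual merit of interpreting the uniform barrier $x(\theta)$ as the limiting quantity for the classical linear Hawkes process with $\Vert h\Vert_{L^1}=\rho$, which is a natural majorant here. Your careful remark on the quantifier order---fixing $k_1,C_0,\delta_0$ first, with the single $\theta$-dependent constraint $k_1\frac{a(t)}{t}|\theta|\le\delta_0$ on $t$---is exactly right, and in particular shows that $k_1$ can be taken to depend only on $\rho$, which is what Lemma~\ref{MDPLemma4} subsequently needs.
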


\begin{proof}
Given $\theta\geq 0$, by Lemma \ref{MDPLemma1} and \eqref{Gn}, we have
$f(n,M,\Gammaset,\theta,t)\leq x(\theta)$.
Notice that in Lemma \ref{MDPLemma0}, $x(0)=0$ and $x'(0)=\frac{1}{1-\rho}>1$ since
$\rho<1$. Therefore, for $0\leq\theta\ll 1$, 
there exists some $k_{1}\geq\frac{1}{1-\rho}$ so that $0\leq x(\theta)\leq k_{1}\theta$.
Therefore, $x\left(\frac{a(t)}{t}\theta\right)\leq k_{1}\frac{a(t)}{t}\theta$
for any sufficiently large $t$. Hence, for $\theta\geq 0$, for sufficiently large $t$,
$f\left(n,M,\Gammaset,\frac{a(t)}{t}\theta,s\right)\leq k_{1}\frac{a(t)}{t}|\theta|$ 
uniformly for $1\leq n\leq M$, $M\in\mathbb{N}$ and $s\geq 0$.
Similarly, given $\theta\leq 0$, by Lemma \ref{MDPLemma1}
and the discussions above, $f(n,M,\Gammaset,\theta,t)\geq x(\theta)\geq k_{1}\theta$ for $\theta\leq 0$
and $|\theta|\ll 1$. Hence, we proved the desired result.
\end{proof}

\begin{lemma}\label{MDPLemma3}
Let us define
\begin{equation}\label{C1Defn}
C_{1}(n,M,s):=1+\int_{0}^{s}C_{1}(n-1,M,s-r)\gamma_{M+1-n}(r)dr,
\end{equation}
and
\begin{equation}\label{C2Defn}
C_{2}(n,M,s):=\int_{0}^{s}\left(C_{2}(n-1,M,s-r)+\frac{1}{2}[C_{1}(n-1,M,s-r)]^{2}\right)\gamma_{M+1-n}(r)dr,
\end{equation}
where $C_{1}(0,M,s):=1$ and $C_{2}(0,M,s):=0$, $s\geq 0$, $n\leq M$, and $M\in\mathbb{N}$. Then, we have
\begin{equation}
C_{1}(n,M,s)\leq\frac{1}{1-\rho}
\qquad\text{and}\qquad
C_{2}(n,M,s)\leq\frac{1}{(1-\rho)^{3}}.
\end{equation}
\end{lemma}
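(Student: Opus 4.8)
The plan is to establish both inequalities by backward induction on the index $n$, running from $n=M$ downward, for each fixed $M\in\mathbb{N}$. Since every bound produced in the induction is a constant depending only on $\rho$, the uniformity in $s\geq 0$, in $n\leq M$, and in $M$ will be automatic. The only structural input needed is the elementary observation that, because $\gamma_n\geq 0$ and $\int_0^\infty\gamma_n(r)\,dr\leq\rho<1$ by Assumption \ref{MainAssumption}(ii), one has $\int_0^s\gamma_n(r)\,dr\leq\rho$ for every $n$ and every $s\geq 0$.

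For the first bound I would start from the base case $C_M^1(s,M)=1\leq\frac{1}{1-\rho}$, valid since $\rho\in(0,1)$, and then argue: if $\sup_{s\geq 0}C_{n+1}^1(s,M)\leq\frac{1}{1-\rho}$, then \eqref{C1Defn} gives
\begin{equation}
C_n^1(s,M)=1+\int_0^s C_{n+1}^1(s-r,M)\gamma_n(r)\,dr\leq 1+\frac{1}{1-\rho}\int_0^s\gamma_n(r)\,dr\leq 1+\frac{\rho}{1-\rho}=\frac{1}{1-\rho},
\end{equation}
uniformly in $s$, which closes the induction. For the second bound the base case is $C_M^2(s,M)=0\leq\frac{1}{(1-\rho)^3}$, and for the inductive step I would assume $\sup_{s\geq 0}C_{n+1}^2(s,M)\leq\frac{1}{(1-\rho)^3}$ and use the already-proved bound $C_{n+1}^1(\cdot,M)\leq\frac{1}{1-\rho}$; then \eqref{C2Defn} yields
\begin{equation}
C_n^2(s,M)\leq\left(\frac{1}{(1-\rho)^3}+\frac{1}{2(1-\rho)^2}\right)\int_0^s\gamma_n(r)\,dr\leq\left(\frac{1}{(1-\rho)^3}+\frac{1}{2(1-\rho)^2}\right)\rho.
\end{equation}

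It then remains to verify that the right-hand side does not exceed $\frac{1}{(1-\rho)^3}$. Multiplying through by $(1-\rho)^3>0$, this is equivalent to $\rho+\frac{1}{2}\rho(1-\rho)\leq 1$, i.e. to $\rho^2-3\rho+2\geq 0$, i.e. to $(1-\rho)(2-\rho)\geq 0$, which holds since $0<\rho<1$. I do not expect a genuine obstacle in this lemma; the one point requiring a little care is keeping the induction hypothesis phrased uniformly in $s$, so the constants do not accumulate as $n$ decreases, and the small algebraic check $(1-\rho)(2-\rho)\geq 0$ that makes the constant $\frac{1}{(1-\rho)^3}$ self-consistent under the recursion.
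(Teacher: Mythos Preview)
Your proposal is correct and follows essentially the same backward induction as the paper's own proof. The only difference is that you explicitly verify the closing algebraic inequality $\rho\bigl(\frac{1}{(1-\rho)^3}+\frac{1}{2(1-\rho)^2}\bigr)\leq\frac{1}{(1-\rho)^3}$ via the factorisation $(1-\rho)(2-\rho)\geq 0$, whereas the paper simply asserts it.
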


\begin{proof}
Let us use induction on $n$. For $n=0$, $C_{1}(0,M,s)=1\leq\frac{1}{1-\rho}$ since $\rho<1$.
Now assume $C_{1}(n-1,M,s)\leq\frac{1}{1-\rho}$, we get
\begin{align}
C_{1}(n,M,s)&\leq 1+\int_{0}^{s}\frac{1}{1-\rho}\gamma_{M+1-n}(r)dr
\\
&\leq 1+\frac{\Vert\gamma_{M+1-n}\Vert_{L^{1}}}{1-\rho}
\nonumber
\\
&\leq\frac{1}{1-\rho}.
\nonumber
\end{align}
It is clear that $C_{2}(0,M,s)=0\leq\frac{1}{(1-\rho)^{3}}$.
Now assume that $C_{2}(n-1,M,s)\leq\frac{1}{(1-\rho)^{3}}$ and apply the inequality
$C_{1}(n-1,M,s)\leq\frac{1}{1-\rho}$ that we have just proved, 
\begin{align}
C_{2}(n,M,s)
&\leq\int_{0}^{s}\left[\frac{1}{(1-\rho)^{3}}+\frac{1}{2}\frac{1}{(1-\rho)^{2}}\right]\gamma_{M+1-n}(r)dr
\\
&\leq\rho\left[\frac{1}{(1-\rho)^{3}}+\frac{1}{2}\frac{1}{(1-\rho)^{2}}\right]
\nonumber
\\
&\leq\frac{1}{(1-\rho)^{3}}.\nonumber
\end{align}
\end{proof}

\begin{lemma}\label{MDPLemma4}
Given any fixed $\theta\in\mathbb{R}$ and $a(t)$ as in Theorem \ref{MDPThm}, 
let $t$ be sufficiently large so that $k_{1}\frac{a(t)}{t}|\theta|\leq\frac{1-\rho}{4}$.
Then, we have
\begin{equation}\label{ErrorControl}
\left|f\left(n,M,\Gammaset,\frac{a(t)}{t}\theta,s\right)
-C_{1}(n,M,s)\frac{a(t)}{t}\theta
-C_{2}(n,M,s)\left(\frac{a(t)}{t}\theta\right)^{2}\right|
\leq k_{2}\left[\frac{a(t)}{t}|\theta|\right]^{3},
\end{equation}
where $C_{1}(n,M,s)$ and $C_{2}(n,M,s)$ are defined in Lemma \ref{MDPLemma3} and
\begin{equation}
k_{2}:=\frac{4\left[\frac{\rho}{2(1-\rho)^{3}}\left[k_{1}+\frac{1}{1-\rho}\right]+\rho k_{1}^{3}\right]}{(4-\rho)(1-\rho)}.
\end{equation}
\end{lemma}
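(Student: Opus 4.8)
The plan is to establish \eqref{ErrorControl} by downward induction on $n$, from $n=M$ to $n=1$, proving the bound uniformly in $s\ge 0$ and $M\in\mathbb{N}$. Abbreviate $\epsilon:=\frac{a(t)}{t}\theta$, so that the hypothesis reads $k_1|\epsilon|\le\frac{1-\rho}{4}$; since $k_1\ge\frac{1}{1-\rho}\ge 1$, this also forces $|\epsilon|\le\tfrac14$. Write $E_n(s):=G_n(s,\epsilon,M)-C_n^1(s,M)\epsilon-C_n^2(s,M)\epsilon^2$ for the quantity to be bounded. The base case $n=M$ is trivial: $G_M(s,\epsilon,M)=\epsilon$, $C_M^1\equiv1$ and $C_M^2\equiv0$, so $E_M\equiv0$.

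For the inductive step, I would subtract the recursion \eqref{Gn} for $G_n$ from \eqref{C1Defn} and \eqref{C2Defn} (the latter two scaled by $\epsilon$ and $\epsilon^2$) to obtain the exact identity
\[
E_n(s)=\int_0^s\Phi_{n+1}(s-r)\,\gamma_n(r)\,dr,\qquad
\Phi_{n+1}(u):=\bigl(e^{g}-1\bigr)-c^1\epsilon-c^2\epsilon^2-\tfrac12(c^1\epsilon)^2,
\]
where I write $g:=G_{n+1}(u,\epsilon,M)$ and $c^i:=C_{n+1}^i(u,M)$. I would then Taylor expand $e^x-1=x+\tfrac12x^2+R(x)$ with the elementary remainder bound $|R(x)|\le|x|^3$, valid for $|x|\le\tfrac14$, applied with $x=g$; this is legitimate since $|g|\le k_1|\epsilon|\le\frac{1-\rho}{4}\le\tfrac14$ by Lemma \ref{MDPLemma2}. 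Substituting $g=c^1\epsilon+c^2\epsilon^2+E_{n+1}(u)$ into $x+\tfrac12x^2$, the constant, first- and second-order parts cancel exactly against the $C^1$ and $C^2$ terms — which is precisely how \eqref{C1Defn} and \eqref{C2Defn} were designed — leaving
\[
\Phi_{n+1}(u)=E_{n+1}(u)+\tfrac12\bigl(c^2\epsilon^2+E_{n+1}(u)\bigr)\bigl(g+c^1\epsilon\bigr)+R(g).
\]

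It remains to estimate $\Phi_{n+1}$ and close the recursion. Using the induction hypothesis $|E_{n+1}(u)|\le k_2|\epsilon|^3$, the uniform bounds $c^1\le\frac{1}{1-\rho}$ and $c^2\le\frac{1}{(1-\rho)^3}$ from Lemma \ref{MDPLemma3}, the bound $|g|\le k_1|\epsilon|$ from Lemma \ref{MDPLemma2}, the remainder bound $|R(g)|\le k_1^3|\epsilon|^3$, and the smallness inequality $|\epsilon|\bigl(k_1+\frac{1}{1-\rho}\bigr)\le\frac{1-\rho}{2}$ (which follows from $k_1|\epsilon|\le\frac{1-\rho}{4}$ and $\frac{1}{1-\rho}\le k_1$), one bounds $|\Phi_{n+1}(u)|\le|\epsilon|^3\bigl[k_2\bigl(1+\frac{1-\rho}{4}\bigr)+\frac{1}{2(1-\rho)^3}\bigl(k_1+\frac{1}{1-\rho}\bigr)+k_1^3\bigr]$. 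Since $\int_0^s\gamma_n(r)\,dr\le\Vert\gamma_n\Vert_{L^1}\le\rho$ for $1\le n\le M$, this gives
\[
|E_n(s)|\le\rho\,|\epsilon|^3\Bigl[k_2\bigl(1+\tfrac{1-\rho}{4}\bigr)+\tfrac{1}{2(1-\rho)^3}\bigl(k_1+\tfrac{1}{1-\rho}\bigr)+k_1^3\Bigr].
\]
The induction closes, i.e. the right-hand side is $\le k_2|\epsilon|^3$, exactly when $\rho\bigl[k_2(1+\frac{1-\rho}{4})+\frac{1}{2(1-\rho)^3}(k_1+\frac{1}{1-\rho})+k_1^3\bigr]\le k_2$; solving this linear inequality for $k_2$ and using $1-\rho\bigl(1+\frac{1-\rho}{4}\bigr)=\frac{(1-\rho)(4-\rho)}{4}>0$ yields precisely the constant $k_2$ displayed in the statement. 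The only genuine point — the rest being the bookkeeping above — is that $\rho<1$ produces a strict contraction in the induction even after the inflation factor $(1+\frac{1-\rho}{4})$ introduced by the cross term $\tfrac12(c^2\epsilon^2+E_{n+1})(g+c^1\epsilon)$, so that one constant $k_2$ works uniformly in $s$, $n$ and $M$; this is also the origin of the denominator $(4-\rho)(1-\rho)$. The main (mild) obstacle is organizing the cancellation in the Taylor step cleanly so that only the genuinely cubic remainders survive.
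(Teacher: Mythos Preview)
Your proof is correct and follows essentially the same approach as the paper's: downward induction on $n$ with base case $n=M$, the Taylor remainder bound $|e^x-1-x-\tfrac{x^2}{2}|\le|x|^3$ applied at $x=G_{n+1}$, the factorization $g^2-(c^1\epsilon)^2=(g-c^1\epsilon)(g+c^1\epsilon)$ to control the quadratic cross term, and closing the recursion via the contraction coming from $\int_0^s\gamma_n\le\rho<1$; the resulting linear inequality in $k_2$ is identical and produces the stated constant. The only difference is organizational: the paper splits the estimate into two pieces (its \eqref{boundI} and \eqref{boundII}) and then combines them, whereas you write the single integrand $\Phi_{n+1}$ and carry out the cancellation in one line, which is slightly tidier but mathematically equivalent.
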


\begin{proof}
Let us prove by induction. For $n=1$, $f\left(1,M,\Gammaset,\frac{a(t)}{t}\theta,s\right)=\frac{a(t)}{t}\theta$,
$C_{1}(0,M,s)=1$ and $C_{2}(0,M,s)=0$, thus \eqref{ErrorControl} holds.
Assume \eqref{ErrorControl} is true for $n-1$. Notice that
\begin{equation}
f\left(n,M,\Gammaset,\frac{a(t)}{t}\theta,s\right)
=\frac{a(t)}{t}\theta+\int_{0}^{s}\left[e^{f(n-1,M,\Gammaset,\frac{a(t)}{t}\theta,s-r)}-1\right]\gamma_{M+1-n}(r)dr.
\end{equation}
Since $|e^{x}-1-x-\frac{x^{2}}{2}|\leq|x|^{3}$ for $|x|<1$, and by Lemma \ref{MDPLemma2}, 
\begin{equation}
\left|f\left(n,M,\Gammaset,\frac{a(t)}{t}\theta,s\right)\right|
\leq k_{1}\frac{a(t)}{t}|\theta|
\leq\frac{1-\rho}{4}<1,
\end{equation}
we have
\begin{align}\label{boundI}
&\bigg|f\left(n,M,\Gammaset,\frac{a(t)}{t}\theta,s\right)
-\bigg[\frac{a(t)}{t}\theta
+\int_{0}^{s}\bigg[f\left(n-1,M,\Gammaset,\frac{a(t)}{t}\theta,s-r\right)
\\
&\qquad\qquad\qquad\qquad
+\frac{1}{2}\left(f\left(n-1,M,\Gammaset,\frac{a(t)}{t}\theta,s-r\right)\right)^{2}\bigg]
\gamma_{M+1-n}(r)dr\bigg]\bigg|
\nonumber
\\
&\leq\int_{0}^{s}\gamma_{M+1-n}(r)\left[k_{1}\frac{a(t)}{t}|\theta|\right]^{3}dr
\nonumber
\\
&\leq\rho k_{1}^{3}\left(\frac{a(t)|\theta|}{t}\right)^{3}.
\nonumber
\end{align}
In addition, by using the induction,
\begin{align}
L&:=\left|\left[f\left(n-1,M,\Gammaset,\frac{a(t)}{t}\theta,s-r\right)\right]^{2}
-C_{1}(n,M,s-r)^{2}\left(\frac{a(t)}{t}\theta\right)^{2}\right|
\\
&=\left|f\left(n-1,M,\Gammaset,\frac{a(t)}{t}\theta,s-r\right)
-C_{1}(n,M,s-r)\left(\frac{a(t)}{t}\theta\right)\right|
\nonumber
\\
&\qquad\qquad\qquad\cdot
\left|f\left(n-1,M,\Gammaset,\frac{a(t)}{t}\theta,s-r\right)
+C_{1}(n,M,s-r)\left(\frac{a(t)}{t}\theta\right)\right|
\nonumber
\\
&\leq\left[C_{2}(n,M,s-r)\left(\frac{a(t)}{t}\theta\right)^{2}
+k_{2}\left(\frac{a(t)}{t}|\theta|\right)^{3}\right]
\nonumber
\\
&\qquad\qquad\qquad\cdot
\left[\left|f\left(n-1,M,\Gammaset,\frac{a(t)}{t}\theta,s-r\right)\right|
+C_{1}(n,M,s-r)\frac{a(t)}{t}|\theta|\right].
\nonumber
\end{align}
Using the bounds in Lemma \ref{MDPLemma3}, we obtain
\begin{equation}
L\leq\left[\frac{1}{(1-\rho)^{3}}\left(\frac{a(t)}{t}\theta\right)^{2}
+k_{2}\left(\frac{a(t)}{t}|\theta|\right)^{3}\right]
\left[k_{1}\frac{a(t)}{t}|\theta|+\frac{1}{1-\rho}\frac{a(t)}{t}|\theta|\right].
\end{equation}
Since $k_{1}\frac{a(t)}{t}|\theta|\leq\frac{1-\rho}{4}$ and $k_{1}\geq\frac{1}{1-\rho}$, we get
\begin{align}\label{boundII}
L&\leq\left(\frac{a(t)}{t}|\theta|\right)^{3}\frac{1}{(1-\rho)^{3}}\left[k_{1}+\frac{1}{1-\rho}\right]
+k_{2}\left(\frac{a(t)}{t}|\theta|\right)^{3}
\left[k_{1}\frac{a(t)}{t}|\theta|+\frac{1}{1-\rho}\frac{a(t)}{t}|\theta|\right]
\\
&\leq\left(\frac{a(t)}{t}|\theta|\right)^{3}\frac{1}{(1-\rho)^{3}}\left[k_{1}+\frac{1}{1-\rho}\right]
+k_{2}\left(\frac{a(t)}{t}|\theta|\right)^{3}
\left[\frac{1-\rho}{4}+\frac{1}{1-\rho}\frac{1-\rho}{4k_{1}}\right]
\nonumber
\\
&\leq\left(\frac{a(t)}{t}|\theta|\right)^{3}
\left[\frac{1}{(1-\rho)^{3}}\left[k_{1}+\frac{1}{1-\rho}\right]+k_{2}\frac{1-\rho}{2}\right].
\nonumber
\end{align}
Let us put \eqref{boundI} and \eqref{boundII} together and define
\begin{align}
I&:=f\left(n,M,\Gammaset,\frac{a(t)}{t}\theta,s\right)-\frac{a(t)}{t}\theta
\left[1+\int_{0}^{s}C_{1}\left(n-1,M,s-r\right)\gamma_{M+1-n}(r)dr\right]
\\
&\qquad
-\left[\frac{a(t)}{t}\theta\right]^{2}
\bigg[\int_{0}^{s}\bigg[C_{2}\left(n-1,M,s-r\right)
\nonumber
\\
&\qquad\qquad\qquad
+\frac{1}{2}\left[C_{1}\left(n-1,M,s-r\right)\right]^{2}\bigg]
\gamma_{M+1-n}(r)dr\bigg].
\nonumber
\end{align}
Therefore,
\begin{align}
|I|&\leq\int_{0}^{s}k_{2}\left[\frac{a(t)}{t}|\theta|\right]^{3}\gamma_{M+1-n}(r)dr
\\
&\qquad
+\frac{1}{2}\int_{0}^{s}\left[\frac{1}{(1-\rho)^{3}}\left[k_{1}+\frac{1}{1-\rho}\right]+k_{2}\frac{1-\rho}{2}\right]
\left(\frac{a(t)}{t}|\theta|\right)^{3}\gamma_{M+1-n}(r)dr
\nonumber
\\
&\qquad\qquad\qquad
+\rho k_{1}^{3}\left(\frac{a(t)}{t}|\theta|\right)^{3}
\nonumber
\\
&\leq\left(\frac{a(t)}{t}|\theta|\right)^{3}
\left[\rho k_{2}+\frac{\rho}{2}\left[\frac{1}{(1-\rho)^{3}}\left[k_{1}+\frac{1}{1-\rho}\right]+k_{2}\frac{1-\rho}{2}\right]+
\rho k_{1}^{3}\right]
\nonumber
\\
&\leq\left(\frac{a(t)}{t}|\theta|\right)^{3}\left[\left[\frac{\rho(1-\rho)}{4}+\rho\right]k_{2}
+\frac{\rho}{2(1-\rho)^{3}}\left[k_{1}+\frac{1}{1-\rho}\right]+\rho k_{1}^{3}\right]
\nonumber
\\
&=k_{2}\left(\frac{a(t)}{t}|\theta|\right)^{3}.\nonumber
\end{align}
This proves the desired result.
\end{proof}

We observe that $k_{1}$ and $k_{2}$ only depend on $\rho$, so our bound in Lemma \ref{MDPLemma4}
is uniform in $n\leq M$, $M\in\mathbb{N}$ and $s\geq 0$. Now, let us go back
to the proof of Theorem \ref{MDPThm}.

\begin{proof}[Proof of Theorem \ref{MDPThm}]
We are interested to prove that the limit
\begin{equation}
\lim_{t\rightarrow\infty}\frac{t}{a(t)^{2}}\log
\mathbb{E}\left[e^{\frac{a(t)}{t}\theta(N_{t}-mt)}\right]
\end{equation}
exists and it can be computed explicitly.

Notice that
\begin{align}\label{ThreeI}
&\lim_{t\rightarrow\infty}\frac{t}{a(t)^{2}}
\log\mathbb{E}\left[e^{\frac{a(t)}{t}\theta(N_{t}-mt)}\right]
\\
&=\lim_{t\rightarrow\infty}\lim_{M\rightarrow\infty}
\frac{t}{a(t)^{2}}\left[\int_{0}^{t}(e^{f(M,M,\Gammaset,\theta,t-s)}-1)\gamma_{0}(s)ds-ma(t)\theta\right]
\nonumber
\\
&=\lim_{t\rightarrow\infty}\lim_{M\rightarrow\infty}\frac{t}{a(t)^{2}}
\bigg[\int_{0}^{t}\left(e^{C_{1}(M,M,t-r)\frac{a(t)}{t}\theta+C_{2}(M,M,t-r)(\frac{a(t)}{t}\theta)^{2}
+O(\frac{a(t)}{t}|\theta|)^{3}}-1\right)\gamma_{0}(r)dr
\nonumber
\\
&\qquad\qquad\qquad\qquad\qquad\qquad\qquad\qquad
-ma(t)\theta\bigg]
\nonumber
\\
&=\lim_{t\rightarrow\infty}\lim_{M\rightarrow\infty}
\frac{t}{a(t)^{2}}\left[\int_{0}^{t}C_{1}(M,M,t-r)\gamma_{0}(r)dr-mt\right]\frac{a(t)}{t}\theta
\nonumber
\\
&\qquad
+\lim_{t\rightarrow\infty}\lim_{M\rightarrow\infty}\frac{t}{a(t)^{2}}
\left(\int_{0}^{t}\left[C_{2}(M,M,t-r)+\frac{1}{2}C_{1}(M,M,t-r)^{2}\right]\gamma_{0}(r)dr\right)
\left(\frac{a(t)}{t}\theta\right)^{2}
\nonumber
\\
&\qquad\qquad
+\lim_{t\rightarrow\infty}O\left(\left(\frac{a(t)}{t}|\theta|\right)^{3}\right)
\int_{0}^{t}\gamma_{0}(r)dr\frac{t}{a(t)^{2}}
\nonumber
\\
&=:I_{1}+I_{2}+I_{3},
\nonumber
\end{align}
where we used the Taylor expansion of $e^{x}$ for $x=o(1)$.

The next step is to carry out careful analysis on $I_{1},I_{2},I_{3}$, the 
last three terms in \eqref{ThreeI}.

For the first term $I_{1}$ in \eqref{ThreeI}, it is easy to see that
\begin{align}
I_{1}&=\lim_{t\rightarrow\infty}\lim_{M\rightarrow\infty}
\frac{t}{a(t)}\theta\left[\frac{1}{t}\int_{0}^{t}C_{1}(M,M,t-r)\gamma_{0}(r)dr-m\right]
\\
&=\lim_{t\rightarrow\infty}
\frac{t}{a(t)}\theta\left[\frac{1}{t}\int_{0}^{t}C_{1}(\infty,\infty,t-r)\gamma_{0}(r)dr-m\right],
\nonumber
\end{align}
where $C_{1}(\infty,\infty,t)$ can be computed via iteration in \eqref{C1Defn} as
\begin{equation}
C_{1}(\infty,\infty,t)=1+\sum_{n=1}^{\infty}(\gamma_{n}\ast\cdots\ast\gamma_{1}\ast 1)(t).
\end{equation}
Let us recall that $m=\sum_{n=0}^{\infty}m_{n}$, where $m_{0}=\overline{\gamma}_{0}$ 
and $m_{n}=\overline{\gamma}_{0}\int_{0}^{\infty}(\gamma_{n}\ast\cdots\ast\gamma_{1})(s)ds$ for $n\geq1$.
Therefore, we have
\begin{align}
I_{1}&= \lim_{t\rightarrow\infty}\frac{t}{a(t)}\theta
\left[\frac{1}{t}\int_{0}^{t}\int_{0}^{t-r}h(s)ds\gamma_{0}(r)dr-m\right]
\\
&=\lim_{t\rightarrow\infty}\frac{t}{a(t)}\theta
\left[\frac{1}{t}\int_{0}^{t}h(s)ds\int_{0}^{t}\gamma_{0}(r)dr -\frac{1}{t} \int_{0}^{t}h(s)ds\int_{t-r}^{t}\gamma_{0}(r)dr-m\right]
\nonumber
\end{align}
 and 
\begin{align}\label{I1Three}
|I_{1}|&\leq|\theta|\limsup_{t\rightarrow\infty}\frac{t}{a(t)}
\left|\frac{1}{t}\int_{0}^{t}\gamma_{0}(s)ds-\overline{\gamma}_{0}\right|\int_{0}^{\infty}h(s)ds
\\
&\qquad\qquad
+|\theta|\limsup_{t\rightarrow\infty}\overline{\gamma}_{0}\frac{t}{a(t)}\int_{t}^{\infty}h(s)ds
\nonumber
\\
&\qquad\qquad\qquad\qquad
+|\theta|\limsup_{t\rightarrow\infty}\int_{0}^{\infty}h(s)
\left|\frac{1}{a(t)}\int_{t-s}^{t}\gamma_{0}(u)du\right|ds,
\nonumber
\end{align}
where $h(t):=1+\sum_{n=1}^{\infty}(\gamma_{n}\ast\cdots\ast\gamma_{1})(t)$
and thus $\int_{0}^{\infty}h(t)dt=\frac{1}{1-\rho}<\infty$. 
The first two terms in \eqref{I1Three} are zero due to Assumption \ref{AssumptionTwo}
and the third term in \eqref{I1Three} is zero by dominated convergence theorem.

Next, let us consider the second term $I_{2}$ in \eqref{ThreeI}. In Lemma \ref{MDPLemma3}, we obtained
a uniform bound on $C_{1}(M,M,s)$ and $C_{2}(M,M,s)$. 
Hence, we have
\begin{align}
I_{2}&=\lim_{t\rightarrow\infty}\lim_{M\rightarrow\infty}\frac{\theta^{2}}{t}
\int_{0}^{t}\left[C_{2}(M,M,t-r)+\frac{1}{2}C_{1}(M,M,t-r)^{2}\right]\gamma_{0}(r)dr
\\
&=\theta^{2}\lim_{t\rightarrow\infty}\frac{1}{t}\int_{0}^{\infty}\left[C_{2}(\infty,\infty,t-r)
+\frac{1}{2}C_{1}(\infty,\infty,t-r)^{2}\right]\gamma_{0}(r)dr
\nonumber
\\
&=\theta^{2}\lim_{t\rightarrow\infty}\left[C_{2}(\infty,\infty,t)
+\frac{1}{2}C_{1}(\infty,\infty,t)^{2}\right]\overline{\gamma}_{0}
\nonumber
\\
&=\frac{1}{2}\sigma^{2}\theta^{2},
\nonumber
\end{align}
since
\begin{equation}\label{Variance}
\overline{\gamma}_{0}\left[C_{2}(\infty,\infty,\infty)+\frac{1}{2}C_{1}(\infty,\infty,\infty)^{2}\right]
=\frac{1}{2}\sum_{j=0}^{\infty}\left(1+\sum_{p=1}^{\infty}\prod_{i=j+1}^{p+j}\int_{0}^{\infty}\gamma_{i}(u)du\right)^{2}m_{j}
=\frac{1}{2}\sigma^{2}.
\end{equation}

Let us verify \eqref{Variance}. First, fix $M$. We will let $M$ go to infinity later. 
We can do that since all
of our estimates for convergence in $M$ are uniform in $t$, so we can interchange the two limits.
Let us define $\Gammafunc(i,j,t)=(\gamma_{i}\ast\gamma_{i+1}\ast\cdots\ast\gamma_{j})(t)$ for $i\leq j$.
Another look at \eqref{C1Defn} reveals that
\begin{equation}
C_{1}(n,M,s)=1+[C_{1}(n-1,M,\cdot)\ast\gamma_{M+1-n}](s).
\end{equation}
Since $C_{1}(0,M,s)=1$, we get
\begin{equation}
C_{1}(n,M,s)=1+\sum_{k=M+1-n}^{M}(\Gammafunc(M+1-n,k,\cdot)\ast 1)(s),
\end{equation}
for $n\leq M$. Similarly,
\begin{equation}
C_{2}(n,M,s)=[C_{2}(n-1,M,\cdot)\ast\gamma_{M+1-n}](s)
+\frac{1}{2}[[C_{1}(n-1,M,\cdot)]^{2}\ast\gamma_{M+1-n}](s).
\end{equation}
Therefore,
\begin{equation}
C_{2}(n,M,s)=\frac{1}{2}\sum_{k=M+1-n}^{M}[(C_{1}(M-k,M,s))^{2}\ast\Gammafunc(M+1-k,k,\cdot)](s).
\end{equation}
Let us define
\begin{equation}
m(i,j):=\prod_{k=i+1}^{j}\int_{0}^{\infty}\gamma_{k}(u)du,
\end{equation}
for $j>i$ and $m(i,i):=1$. Hence, $m(0,j)=m_{j}$. Moreover, let us define
\begin{align}
I_{1}(M)&:=
\left[C_{2}(M,M,\infty)+\frac{1}{2}[C_{1}(M,M,\infty)]^{2}\right]\overline{\gamma}_{0}
\\
&=\frac{\overline{\gamma}_{0}}{2}\sum_{n=0}^{M}\left[1+\sum_{k=n+1}^{M}\int_{0}^{\infty}\Gammafunc(n+1,k,s)ds\right]^{2}m(0,n),
\nonumber
\end{align}
and
\begin{align}
I_{2}(M,t)&:=\frac{1}{t}\left[\left(C_{2}(M,M,\cdot)+\frac{1}{2}\left(C_{1}(M,M,\cdot)\right)^{2}\right)\ast\gamma_{0}\right](t)
\\
&=\frac{1}{2t}\left[\sum_{n=1}^{M}(C_{1}(M-n,M,\cdot))^{2}\ast\Gammafunc(1,n,\cdot)\ast\gamma_{0}
+(C_{1}(M,M,\cdot))^{2}\ast\gamma_{0}\right](t)
\nonumber
\end{align}
Now, 
\begin{align}
&I_{1}(M)-I_{2}(M,t)
\\
&=
\frac{1}{2}\sum_{n=0}^{M}\left[1+\sum_{k=n+1}^{M}(\Gammafunc(n+1,k,\cdot)\ast 1)(\infty)\right]^{2}
\left[\overline{\gamma}_{0}m(0,n)-\frac{(\Gammafunc(1,n,\cdot)\ast\gamma_{0})(t)}{t}\right]
\nonumber
\\
&\qquad
+\frac{1}{2t}\sum_{n=0}^{M}\left[\left[1+\sum_{k=n+1}^{M}(\Gammafunc(n+1,k,\cdot)\ast 1)(\infty)\right]^{2}
-\left[1+\sum_{k=n+1}^{M}(\Gammafunc(n+1,k,\cdot)\ast 1)(\cdot)\right]^{2}\right]
\nonumber
\\
&\qquad\qquad\qquad\qquad\ast\Gammafunc(1,n,\cdot)\ast\gamma_{0}(t).
\nonumber
\end{align}
We denote the first sum by $L_{1}$ and the second sum by $L_{2}$. 
Since $C_{1}(n,t,M)\leq\frac{1}{1-\rho}$ for all $n$ and $t$ by Lemma \ref{MDPLemma3},
\begin{align}
L_{1}&\leq\frac{1}{2(1-\rho)^{2}}\sum_{n=0}^{M}\overline{\gamma}_{0}m(0,n)-\frac{(\Gammafunc(1,n,\cdot)\ast\gamma_{0})(t)}{t}
\\
&\leq\frac{1}{2(1-\rho)^{2}}\left|\frac{1}{t}\int_{0}^{t}\gamma_{0}(s)ds-\overline{\gamma}_{0}\right|
\int_{0}^{\infty}h(s)ds
\nonumber
\\
&\qquad\qquad
+\overline{\gamma}_{0}\int_{t}^{\infty}h(s)ds
+\int_{0}^{\infty}h(s)\frac{1}{t}\int_{t-s}^{t}\gamma_{0}(u)duds.
\nonumber
\end{align}
The right hand side of the above equation goes to zero as $t\rightarrow\infty$ since $h(\cdot)$ is integrable.
Next, let us bound $L_{2}$.
\begin{align}
L_{2}&=\frac{1}{2t}\sum_{n=0}^{M}\left[\left[1+\sum_{k=n+1}^{M}(\Gammafunc(n+1,k,\cdot)\ast 1)(\infty)\right]
-\left[1+\sum_{k=n+1}^{M}(\Gammafunc(n+1,k,\cdot)\ast 1)(\cdot)\right]\right]
\\
&\qquad\cdot
\left[\left[1+\sum_{k=n+1}^{M}(\Gammafunc(n+1,k,\cdot)\ast 1)(\infty)\right]
+\left[1+\sum_{k=n+1}^{M}(\Gammafunc(n+1,k,\cdot)\ast 1)(\cdot)\right]\right]
\nonumber
\\
&\qquad\qquad\qquad\qquad\qquad\ast\Gammafunc(1,n,\cdot)\ast\gamma_{0}(t)
\nonumber
\\
&\leq\frac{1}{2t}\sum_{n=0}^{M}\frac{2}{1-\rho}
\left[\left[1+\sum_{k=n+1}^{M}(\Gammafunc(n+1,k,\cdot)\ast 1)(\infty)\right]
-\left[1+\sum_{k=n+1}^{M}(\Gammafunc(n+1,k,\cdot)\ast 1)(\cdot)\right]\right]
\nonumber
\\
&\qquad\qquad\qquad\qquad\qquad\ast\gamma_{0}(t)\cdot\rho^{n}
\nonumber
\\
&\leq\frac{1}{2t}\sum_{n=0}^{M}\frac{2\rho^{n}}{1-\rho}
\int_{0}^{t}\left[\int_{t-r}^{\infty}\sum_{k=n+1}^{M}{\Gammafunc(n+1,k,s)}ds\right]\gamma_{0}(r)dr
\nonumber
\\
&\leq\frac{1}{2t}\sum_{n=0}^{M}\frac{2\rho^{n}}{1-\rho}
\left[\int_{0}^{t}\sum_{k=n+1}^{M}{\Gammafunc(n+1,k,s)}\int_{t-r}^{t}\gamma_{0}(r)drds
+\int_{t}^{\infty}\sum_{k=n+1}^{M}{\Gammafunc(n+1,k,s)}\int_{0}^{t}\gamma_{0}(r)drds\right]
\nonumber
\\
&\leq\sum_{n=0}^{M}\frac{2\rho^{n}}{1-\rho}
\left[\int_{0}^{\infty}h(s)ds\frac{1}{t}\int_{t-r}^{t}\gamma_{0}(r)dr
+ \int_{t}^{\infty}h(s)ds(\overline{\gamma}_{0}+1)\right],
\nonumber
\end{align}
which goes to zero by dominated convergence theorem.
The difference in $M$ is given by
\begin{align}
&I_{1}(\infty)-I_{1}(M)
\\
&=\frac{\overline{\gamma}_{0}}{2}\sum_{n=M+1}^{\infty}
\left[1+\sum_{k=n+1}^{M}(\Gammafunc(n+1,k,\cdot)\ast 1)(\infty)\right]^{2}m(0,n)
\nonumber
\\
&\leq\frac{\overline{\gamma}_{0}}{2}\frac{1}{(1-\rho)^{2}}\sum_{n=M+1}^{\infty}\rho^{n},
\nonumber
\end{align}
and for sufficiently large $t$ (uniformly in $M$),
\begin{align}
&I_{2}(\infty,t)-I_{2}(M,t)
\\
&=\frac{1}{2t}\sum_{n=M+1}^{\infty}\left[1+\sum_{k=n+1}^{M}\int_{0}^{\cdot}\Gammafunc(n+1,k,s)ds\right]^{2}
\ast\Gammafunc(1,n,\cdot)\ast\gamma_{0}(t)
\nonumber
\\
&\leq\frac{1}{2(1-\rho)^{2}}\sum_{n=M+1}^{\infty}\rho^{n}\frac{1}{t}\int_{0}^{t}\gamma_{0}(r)dr
\nonumber
\\
&\leq\frac{\overline{\gamma}_{0}+1}{2(1-\rho)^{2}}\sum_{n=M+1}^{\infty}\rho^{n}.
\nonumber
\end{align}
Hence, we proved \eqref{Variance}.

Finally, let us show that the third term $I_{3}$ in \eqref{ThreeI} is zero in the limit.
For some universal constant $K>0$,
\begin{align}
|I_{3}|&\leq\limsup_{t\rightarrow\infty}K\left(\frac{a(t)}{t}|\theta|\right)^{3}
\left(\int_{0}^{t}\gamma_{0}(r)dr\right)\frac{t}{a(t)^{2}}
\\
&=\limsup_{t\rightarrow\infty}K\frac{a(t)}{t}|\theta|^{3}\frac{\int_{0}^{t}\gamma_{0}(r)dr}{t}
\nonumber
\\
&\leq\limsup_{t\rightarrow\infty}K\frac{a(t)}{t}|\theta|^{3}\overline{\gamma}_{0}\nonumber
\\
&=0.\nonumber
\end{align}

Hence, we proved that
\begin{equation}
\lim_{t\rightarrow\infty}\frac{t}{a(t)^{2}}\log
\mathbb{E}\left[e^{\frac{a(t)}{t}\theta(N_{t}-mt)}\right]=\frac{1}{2}\theta^{2}\sigma^{2}.
\end{equation}
By G\"{a}tner-Ellis theorem, the proof is complete.
\end{proof}

\begin{remark}
Following the same proof of Theorem \ref{MDPThm}, we can show that
for any $\theta\in\mathbb{R}$, 
$\lim_{t\rightarrow\infty}\mathbb{E}[e^{\frac{i\theta}{\sqrt{t}}(N_{t}-mt)}]
=e^{-\frac{\theta^{2}}{2}\sigma^{2}}$. 
In other words, our method gives an altenative proof to the central limit
theorem that was obtained in Fierro et al. \cite{Fierro}.
\end{remark}

\begin{remark}
Indeed, one can also consider the moderate deviations in the presence of random marks,
i.e. for a sequence of i.i.d. real-valued random variables $C_{1},C_{2},\ldots$
with mean $\mathbb{E}[C_{1}]$ and variance $\text{Var}[C_{1}]$ independent of $N_{t}$,
we expect that for a sequence $a(t)$ so that $\sqrt{t}\ll a(t)\ll t$,
$\mathbb{P}(\frac{\sum_{i=1}^{N_{t}}C{i}-\mathbb{E}[C_{1}]\mathbb{E}[N_{t}]}{a(t)}\in\cdot)$
follows a large deviation principle with rate function $J_{C}(x):=\frac{x^{2}}{2\sigma_{C}^{2}}$,
where $\sigma_{C}^{2}:=m\text{Var}[C_{1}]+\mathbb{E}[C_{1}]\sigma^{2}$
The proofs are similar to the proofs of moderate deviations for the unmarked case and we
will not go into the details in this paper.
\end{remark}

\section{Applications to Finance}\label{FinanceSection}

\subsection{Microstructure Noise}

Let $X_{t}$ stand for some asset price at time $t$. The signature plot can be defined
for $X_{t}$ over a time period $[0,T]$ at the time scale $\tau$ as
\begin{equation}\label{Vol}
\hat{C}(\tau):=\frac{1}{T}\sum_{n=0}^{\lfloor T/\tau\rfloor}(X_{(n+1)\tau}-X_{n\tau})^{2}.
\end{equation}
This is also known as the realized volatility. The microstructure noise effect
is described by an increase of the realized volatility when the time scale $\tau$ decreases.
This behavior is different from what one would expect if $X_{t}$ is a Brownian motion,
for which $\hat{C}(\tau)$ will be constant in $\tau$ as $T\rightarrow\infty$.

If $X^{1}_{t}$ and $X^{2}_{t}$ are the prices of two assets, we can define
\begin{equation}\label{Corr}
\hat{\rho}(\tau):=\frac{\hat{C}_{12}(\tau)}{\sqrt{\hat{C}_{1}(\tau)\hat{C}_{2}(\tau)}},
\end{equation}
where
\begin{equation}
\hat{C}_{12}(\tau):=\frac{1}{T}\sum_{n=0}^{\lfloor T/\tau\rfloor}
(X^{1}_{(n+1)\tau}-X^{1}_{n\tau})(X^{2}_{(n+1)\tau}-X^{2}_{n\tau}),
\end{equation}
and $\hat{C}_{1}(\tau)$ and $\hat{C}_{2}(\tau)$ are defined similarly as in \eqref{Vol}.

The Epps effect, named after Epps \cite{Epps} describes the 
pheonomenon that the correlation coefficient
$\hat{\rho}(\tau)$ increases in $\tau$ and it tends to zero as $\tau\rightarrow 0$.

Bacry et al. \cite{BacryII} studied the signature plot $\hat{C}(\tau)$ as in \eqref{Vol}
for the price model, $X_{t}=N_{1}(t)-N_{2}(t)$, where
$(N_{1},N_{2})$ is a bivariate Hawkes process and they also studied
correlation coefficient $\hat{\rho}(\tau)$ as in \eqref{Corr} for $X^{1}_{t}=N_{1}(t)-N_{2}(t)$,
$X^{2}_{t}=N_{3}(t)-N_{4}(t)$, where $(N_{1},N_{2},N_{3},N_{4})$ is a multivariate Hawkes process.
They considered the case of long horizon, i.e. the large $T$ limit and hence studied the macroscopic properties
of a multivariate Hawkes process, see e.g. \cite{Bacry}, \cite{BacryII}. The large $T$ limit can correspond to a trading day realization
of the price model. In \cite{BacryII}, they considered for instance a realization of $20$ hours (Figure 2 in \cite{BacryII}).

Following the ideas in \cite{Bacry}, \cite{BacryII}, 
one can do the same analysis for the Hawkes process with differnt exciting functions.
For example, we can fix a partition $(A_{1},A_{2})$ for $\mathbb{N}\cup\{0\}$ and 
let $N_{1}=\sum_{n\in A_{1}}N^{n}$ and $N_{2}=\sum_{n\in A_{2}}N^{n}$. 
Then, we can study the signature plot $\hat{C}(\tau)$ for $X_{t}=N_{1}(t)-N_{2}(t)$. 
One can also fix a partition $(A_{1},A_{2},A_{3},A_{4})$ for $\mathbb{N}\cup\{0\}$ and 
let $N_{i}=\sum_{n\in A_{i}}N^{n}$, $1\leq i\leq 4$.
Then, we can study the correlation coefficient $\hat{\rho}(\tau)$ for $X^{1}_{t}=N_{1}(t)-N_{2}(t)$,
$X^{2}_{t}=N_{3}(t)-N_{4}(t)$.

In the context of the Hawkes process with different exciting functions, since we already
proved ergodicity in Theorem \ref{ergodicity}, by considering large $T$, i.e. letting 
$T\rightarrow\infty$, by ergodic theorem,
\begin{equation}
\hat{C}(\tau)\rightarrow C(\tau):=\frac{1}{\tau}\mathbb{E}[(X_{\tau})^{2}],
\end{equation}
and
\begin{equation}
\hat{\rho}(\tau)\rightarrow\rho(\tau):=\frac{\mathbb{E}[X^{1}_{\tau}X^{2}_{\tau}]}
{\sqrt{\mathbb{E}[(X^{1}_{\tau})^{2}]\mathbb{E}[(X^{2}_{\tau})^{2}]}},
\end{equation}
as $T\rightarrow\infty$, where the expectations are taken over the stationary version
of the processes. Heuristically, as $\tau\rightarrow 0$, 
$\mathbb{E}[X^{1}_{\tau}X^{2}_{\tau}]=O(\tau^{2})$, 
$\mathbb{E}[(X^{1}_{\tau})^{2}]=O(\tau)$ and $\mathbb{E}[(X^{2}_{\tau})^{2}]=O(\tau)$.
Thus, as $\tau\rightarrow 0$, $\rho(\tau)=O(\tau)$ and this explains the vanishing
correlation coefficient as $\tau\rightarrow 0$ in the Epps effect.

Our main result is that $C(\tau)$ and $\rho(\tau)$ can be computed by evaluating $\mathbb{E}[(X^{1}_{\tau})^{2}]$, $\mathbb{E}[(X^{2}_{\tau})^{2}$,
and $\mathbb{E}[X^{1}_{\tau}X^{2}_{\tau}]$:

\begin{proposition}
Under Assumption \ref{MainAssumption},
\begin{align}
&\mathbb{E}[(X^{1}_{\tau})^{2}]
=\sum_{i\in A_{1}}\overline{\gamma}_{0}m_{i}\tau
+\sum_{i,j\in A_{1}}\int_{0}^{\tau}\int_{0}^{\tau}\rho(i,j,s-u)dsdu\label{X1Eqn}
\\
&\qquad+\sum_{i\in A_{2}}\overline{\gamma}_{0}m_{i}\tau
+\sum_{i,j\in A_{2}}\int_{0}^{\tau}\int_{0}^{\tau}\rho(i,j,s-u)dsdu
-2\sum_{i\in A_{1},j\in A_{2}}\int_{0}^{\tau}\int_{0}^{\tau}\rho(i,j,s-u)dsdu
\nonumber
\\
&\mathbb{E}[(X^{2}_{\tau})^{2}]
=\sum_{i\in A_{3}}\overline{\gamma}_{0}m_{i}\tau
+\sum_{i,j\in A_{3}}\int_{0}^{\tau}\int_{0}^{\tau}\rho(i,j,s-u)dsdu\label{X2Eqn}
\\
&\qquad+\sum_{n\in A_{4}}\overline{\gamma}_{0}m_{n}\tau
+\sum_{i,j\in A_{4}}\int_{0}^{\tau}\int_{0}^{\tau}\rho(i,j,s-u)dsdu
-2\sum_{i\in A_{3},j\in A_{4}}\int_{0}^{\tau}\int_{0}^{\tau}\rho(i,j,s-u)dsdu
\nonumber
\\
&\mathbb{E}[X^{1}_{\tau}X^{2}_{\tau}]
=\sum_{i\in A_{1},j\in A_{3}}\int_{0}^{\tau}\int_{0}^{\tau}\rho(i,j,s-u)dsdu
+\sum_{i\in A_{2},j\in A_{4}}\int_{0}^{\tau}\int_{0}^{\tau}\rho(i,j,s-u)dsdu\label{X1X2Eqn}
\\
&\qquad
-\sum_{i\in A_{2},j\in A_{3}}\int_{0}^{\tau}\int_{0}^{\tau}\rho(i,j,s-u)dsdu
-\sum_{i\in A_{1},j\in A_{4}}\int_{0}^{\tau}\int_{0}^{\tau}\rho(i,j,s-u)dsdu,
\nonumber
\end{align}
where $\rho(\cdot,\cdot,\cdot)$ are defined iteratively as $\rho(\cdot,\cdot,t)=\rho(\cdot,\cdot-t)$, $t>0$,
and for $t>s$, $i\geq 1$,
\begin{align}
\rho(i,i,t-s)&=\int_{-\infty}^{t}\int_{-\infty}^{s}\gamma_{i}(t-u)\gamma_{i}(s-v)\rho(i-1,i-1,|u-v|)dudv
\nonumber
\\
&\qquad\qquad
+\int_{-\infty}^{s}\gamma_{i}(t-u)\gamma_{i}(s-u)\overline{\gamma}_{0}m_{i-1}du,
\end{align}
and $\rho(0,0,t-s)=(\overline{\gamma}_{0})^{2}$, $t>s$ and for $j\geq i+1$, $t>s$,
\begin{equation}
\rho(i,j,t-s)=\int_{-\infty}^{s}\gamma_{j}(s-u)\rho(i,j-1,t-u)du,
\end{equation}
and finally, 
\begin{equation}
\rho(i,i+1,t-s)
=\begin{cases}
\int_{-\infty}^{s}\gamma_{i}(s-u)\rho(i,i,t-u)du &\text{if $t>s$}
\\
\int_{-\infty}^{s}\gamma_{i}(s-u)\rho(i,i,t-u)du+\gamma_{i}(s-t)\overline{\gamma}_{0}m_{i} &\text{if $t<s$}
\end{cases}.
\end{equation}
\end{proposition}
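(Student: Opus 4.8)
The plan is to pass to the stationary version of the process (over which the $T\to\infty$ limits are taken) and to reduce all three quantities to second-order product densities of the individual generations $N^{\dagger,n}$, whose recursions then follow from the conditional-Poisson branching structure. \emph{Step 1 (bilinear expansion).} In the stationary regime write $\Delta^{n}:=N^{\dagger,n}(0,\tau]$, so that (since $N_{k}=\sum_{n\in A_{k}}N^{\dagger,n}$ and $X^{1}_{\tau},X^{2}_{\tau}$ denote increments over an interval of length $\tau$) we have $X^{1}_{\tau}=\sum_{n\in A_{1}}\Delta^{n}-\sum_{n\in A_{2}}\Delta^{n}$ and $X^{2}_{\tau}=\sum_{n\in A_{3}}\Delta^{n}-\sum_{n\in A_{4}}\Delta^{n}$. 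Expanding the square and the product and using linearity of expectation,
\[
\mathbb{E}[(X^{1}_{\tau})^{2}]=\sum_{i,j\in A_{1}}P_{ij}+\sum_{i,j\in A_{2}}P_{ij}-2\sum_{i\in A_{1},\,j\in A_{2}}P_{ij},\qquad P_{ij}:=\mathbb{E}[\Delta^{i}\Delta^{j}],
\]
and likewise for $\mathbb{E}[(X^{2}_{\tau})^{2}]$ and $\mathbb{E}[X^{1}_{\tau}X^{2}_{\tau}]$, the sign pattern coming from the differences $N_{1}-N_{2}$ and $N_{3}-N_{4}$. The interchange of the infinite sums with the expectation is justified by the geometric decay of $\mathbb{E}[\Delta^{i}]$ and of $\int_{0}^{\tau}\!\int_{0}^{\tau}\rho(i,j,s-u)\,ds\,du$ in their indices, established in Step 4 from Assumption \ref{MainAssumption}(ii). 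It therefore suffices to prove that, for $i\ne j$, $P_{ij}=\int_{0}^{\tau}\!\int_{0}^{\tau}\rho(i,j,s-u)\,ds\,du$, while $P_{ii}=\mathbb{E}[\Delta^{i}]+\int_{0}^{\tau}\!\int_{0}^{\tau}\rho(i,i,s-u)\,ds\,du$, where $\rho(i,j,\cdot)$ is the (translation-invariant, by stationarity) second-order product density of the pair $(N^{\dagger,i},N^{\dagger,j})$ and $\mathbb{E}[\Delta^{i}]$ is the mean increment of the $i$-th generation; the absence of a diagonal term when $i\ne j$ uses that distinct generations have no common jumps.

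\emph{Step 2 (moment measures).} By Campbell's formula and Tonelli, $P_{ij}=\int_{0}^{\tau}\!\int_{0}^{\tau}\mu_{ij}(ds,du)$ where $\mu_{ij}(ds,du):=\mathbb{E}[N^{\dagger,i}(ds)N^{\dagger,j}(du)]$ is the second moment measure of the pair. When $i=j$, $\mu_{ii}$ decomposes as its restriction to the diagonal $\{s=u\}$ — which, $N^{\dagger,i}$ being simple, equals $\mathbb{E}[N^{\dagger,i}(ds)]$ and integrates to $\mathbb{E}[\Delta^{i}]$ — plus an absolutely continuous part of density $\rho(i,i,s-u)$; when $i\ne j$ there is no diagonal part and $\mu_{ij}$ has density $\rho(i,j,s-u)$. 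This is exactly the decomposition of $P_{ij}$ used in Step 1.

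\emph{Step 3 (recursions for $\rho$).} Recall that conditionally on $\mathcal{F}_{n-1}:=\sigma(N^{\dagger,0},\dots,N^{\dagger,n-1})$ the process $N^{\dagger,n}$ is inhomogeneous Poisson with intensity $\lambda^{\dagger,n}_{t}=\int_{-\infty}^{t}\gamma_{n}(t-u)N^{\dagger,n-1}(du)$. Since $N^{\dagger,0}$ is Poisson of rate $\overline{\gamma}_{0}$, its off-diagonal product density is $\rho(0,0,\cdot)\equiv\overline{\gamma}_{0}^{2}$. For $i=j\ge1$: conditioning on $\mathcal{F}_{i-1}$, for $t\ne s$ one has $\mathbb{E}[N^{\dagger,i}(dt)N^{\dagger,i}(ds)\mid\mathcal{F}_{i-1}]=\lambda^{\dagger,i}_{t}\lambda^{\dagger,i}_{s}\,dt\,ds$; taking expectations and expanding the two stochastic integrals against $N^{\dagger,i-1}$ produces $\int\!\int\gamma_{i}(t-u)\gamma_{i}(s-v)\,\mathbb{E}[N^{\dagger,i-1}(du)N^{\dagger,i-1}(dv)]$, whose off-diagonal part gives the term in $\rho(i-1,i-1,|u-v|)$ and whose diagonal $u=v$ part, by simplicity of $N^{\dagger,i-1}$, collapses to a single integral of $\gamma_{i}(t-u)\gamma_{i}(s-u)$ against the first-moment density of $N^{\dagger,i-1}$ — this is the stated recursion for $\rho(i,i,\cdot)$. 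For $j\ge i+2$, $N^{\dagger,i}$ is $\mathcal{F}_{j-1}$-measurable while $N^{\dagger,j}$ is conditionally Poisson given $\mathcal{F}_{j-1}$, so $\mathbb{E}[N^{\dagger,i}(dt)N^{\dagger,j}(ds)]=\int_{-\infty}^{s}\gamma_{j}(s-u)\,\mathbb{E}[N^{\dagger,i}(dt)N^{\dagger,j-1}(du)]\,ds$; as $i<j-1$ the pair $(i,j-1)$ lies in distinct generations and no coincidence term arises, which is the recursion reducing $\rho(i,j,\cdot)$ to $\gamma_{j}*\rho(i,j-1,\cdot)$. For $j=i+1$ the same identity holds with $N^{\dagger,j-1}=N^{\dagger,i}$, so $\mathbb{E}[N^{\dagger,i}(dt)N^{\dagger,i}(du)]$ appears on the right; its diagonal $u=t$ contributes, but only when $t$ precedes $s$, the extra parent--child term proportional to $\gamma_{i+1}(s-t)$ times the first-moment density of $N^{\dagger,i}$ (no such term when $t>s$, since offspring cannot precede their parents), giving the two-case formula for $\rho(i,i+1,\cdot)$.

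\emph{Step 4 (summability and conclusion).} By induction on the recursions, using $\Vert\gamma_{n}\Vert_{L^{1}}\le\rho<1$ and $m_{n}\le\overline{\gamma}_{0}\rho^{\,n}$, one obtains $\int_{0}^{\tau}\!\int_{0}^{\tau}\rho(i,j,s-u)\,ds\,du\le C(\tau)\rho^{\,i+j}$, and $\mathbb{E}[\Delta^{i}]$ decays geometrically in $i$; this both legitimizes the Fubini and interchange steps above and makes the double sums over $A_{1},\dots,A_{4}$ absolutely convergent. Substituting the Step 2 decomposition of $P_{ij}$ into the Step 1 expansions and grouping terms according to whether the two generation indices lie in the same block $A_{k}$ or not yields \eqref{X1Eqn}, \eqref{X2Eqn} and \eqref{X1X2Eqn}. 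The main obstacle is Step 3: correctly accounting for the coincidence contributions — the diagonal $u=v$ term in $\rho(i,i,\cdot)$ and the parent--child term in $\rho(i,i+1,\cdot)$ — and verifying that $N^{\dagger,i}$ and $N^{\dagger,j}$ are conditionally independent given $\mathcal{F}_{\max(i,j)-1}$, which is precisely what allows the product densities to be peeled off one generation at a time.
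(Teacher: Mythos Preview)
Your proposal is correct and follows essentially the same route as the paper: expand $X^{1}_{\tau},X^{2}_{\tau}$ bilinearly into generation-by-generation increments, identify the diagonal contribution via simplicity and the off-diagonal contribution via the product density $\rho(i,j,\cdot)$, and then derive the recursions for $\rho$ by conditioning on the parent generation and using the conditional-Poisson structure. The paper's argument is the same computation written in informal differential notation $N^{i}(dt)$ rather than in the moment-measure language you use, and it omits the summability justification that you supply in Step~4; your version is more carefully stated but not a different method. One small remark: in Step~3 you write the parent--child kernel as $\gamma_{i+1}(s-t)$, which is indeed what the branching structure dictates since $\lambda^{\dagger,i+1}_{s}=\int\gamma_{i+1}(s-u)N^{\dagger,i}(du)$; the statement and the paper's proof write $\gamma_{i}$ there, which appears to be a typo in the original.
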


\begin{proof}
Let $N^{i}(dt):=N^{i}_{t+d\delta}-N^{i}_{t}$. 

First, for any $i\in\mathbb{N}\cup\{0\}$,
\begin{equation}
\frac{1}{d\delta}\mathbb{E}[N^{i}(dt)]=\overline{\gamma}_{0}m_{i},
\end{equation}
where $m_{i}$ is defined in \eqref{mn} for $i\in\mathbb{N}$ and $m_{0}:=1$.

Second, since $N^{i}$ is a simple point process,
\begin{equation}
\frac{1}{d\delta}\mathbb{E}[N^{i}(dt)N^{i}(dt)]=\frac{1}{d\delta}\mathbb{E}[N^{i}(dt)]=\overline{\gamma}_{0}m_{i}.
\end{equation}

Third, for any $t\neq s$, by stationarity, we can define
\begin{equation}
\rho(i,j,t-s):=\frac{1}{(d\delta)^{2}}\mathbb{E}[N^{i}(dt)N^{j}(ds)].
\end{equation}
Therefore, we can compute that
\begin{align}
\mathbb{E}[(X^{1}_{\tau})^{2}]
&=\mathbb{E}\left[\left(\sum_{n\in A_{1}}\int_{0}^{t}N^{n}(ds)-\sum_{n\in A_{2}}\int_{0}^{t}N^{n}(ds)\right)^{2}\right]
\\
&=\sum_{i\in A_{1}}\overline{\gamma}_{0}m_{i}\tau
+\sum_{i,j\in A_{1}}\int_{0}^{\tau}\int_{0}^{\tau}\rho(i,j,s-u)dsdu
\nonumber
\\
&\qquad+\sum_{i\in A_{2}}\overline{\gamma}_{0}m_{i}\tau
+\sum_{i,j\in A_{2}}\int_{0}^{\tau}\int_{0}^{\tau}\rho(i,j,s-u)dsdu
-2\sum_{i\in A_{1},j\in A_{2}}\int_{0}^{\tau}\int_{0}^{\tau}\rho(i,j,s-u)dsdu
\nonumber
\end{align}
Similarly, we can show \eqref{X2Eqn} and \eqref{X1X2Eqn}.

What remains is to compute $\rho(\cdot,\cdot,\cdot)$.
By symmetry,
\begin{equation}
\rho(i,i,t)=\rho(i,i,-t),\qquad-\infty<t<\infty.
\end{equation}
Therefore, for $t>s$, and $i\geq 1$,
\begin{align}
\rho(i,i,t-s)
&=\mathbb{E}[\lambda^{i}_{t}\lambda^{i}_{s}]
\\
&=\mathbb{E}\left[\int_{-\infty}^{t}\gamma_{i}(t-u)N^{i-1}(du)\int_{-\infty}^{s}\gamma_{i}(s-v)N^{i-1}(dv)\right]
\nonumber
\\
&=\int_{-\infty}^{t}\int_{-\infty}^{s}\gamma_{i}(t-u)\gamma_{i}(s-v)\rho(i-1,i-1,|u-v|)dudv
\nonumber
\\
&\qquad\qquad
+\int_{-\infty}^{s}\gamma_{i}(t-u)\gamma_{i}(s-u)\overline{\gamma}_{0}m_{i-1}du.
\nonumber
\end{align}
It is clear that $\rho(0,0,t-s)=(\overline{\gamma}_{0})^{2}$ for any $t>s$.

Fourth, for $j\geq i+1$,
\begin{align}
\rho(i,j,t-s)&=\frac{1}{(d\delta)^{2}}\mathbb{E}[N^{i}(dt)N^{j}(ds)]
\\
&=\frac{1}{d\delta}\mathbb{E}[N^{i}(dt)\lambda^{j}_{s}]
\nonumber
\\
&=\frac{1}{d\delta}\mathbb{E}\left[N^{i}(dt)\int_{-\infty}^{s}\gamma_{j}(s-u)N^{j-1}(du)\right]
\nonumber
\\
&=\int_{-\infty}^{s}\gamma_{j}(s-u)\rho(i,j-1,t-u)du.
\nonumber
\end{align}

Fifth and finally,
\begin{align}
\rho(i,i+1,t-s)&=\frac{1}{(d\delta)^{2}}\mathbb{E}[N^{i}(dt)N^{i+1}(ds)]
\\
&=\frac{1}{d\delta}\mathbb{E}[N^{i}(dt)\lambda^{i+1}_{s}]
\nonumber
\\
&=\frac{1}{d\delta}\mathbb{E}\left[N^{i}(dt)\int_{-\infty}^{s}\gamma_{i}(s-u)N^{i}(du)\right]
\nonumber
\\
&=
\begin{cases}
\int_{-\infty}^{s}\gamma_{i}(s-u)\rho(i,i,t-u)du &\text{if $t>s$}
\\
\int_{-\infty}^{s}\gamma_{i}(s-u)\rho(i,i,t-u)du+\gamma_{i}(s-t)\overline{\gamma}_{0}m_{i} &\text{if $t<s$}
\end{cases}.
\nonumber
\end{align}
\end{proof}

\subsection{Asymptotic Ruin Probabilities for a Risk Process with Hawkes Arrivals
with Different Exciting Functions}
In this section, we study the applications to ruin probabilities.
The applications of the Hawkes processes to ruin probabilities in insurnace have
been studied in Stabile and Torrisi \cite{Stabile}, Zhu \cite{ZhuRuin} for instance.
The advantage of using a Hawkes processes than a standard Poisson process is that the arrivals of the claims
will have a contagion and clustering effect.
We consider the following risk model for the surplus process $R_{t}$ of an insurance portfolio,
\begin{equation}
R_{t}=u+pt-\sum_{i=1}^{N_{t}}C_{i},
\end{equation}
where $u>0$ is the initial reserve, $p>0$ is the constant premium and 
the $C_{i}$'s are i.i.d. positive random variables
with $\mathbb{E}[e^{\theta C_{1}}]<\infty$ for any $\theta\in\mathbb{R}$. 
$C_{i}$ represents the claim size at the $i$th arrival time, 
these being independent of $N_{t}$, the Hawkes process with exciting functions
$(\gamma_{n})_{n\in\mathbb{N}\cup\{0\}}$.

For $u>0$, let
\begin{equation}
\tau_{u}=\inf\{t>0: R_{t}\leq 0\},
\end{equation}
and denote the infinite and finite horizon ruin probabilities by
\begin{equation}
\psi(u)=\mathbb{P}(\tau_{u}<\infty),\quad\psi(u,uz)=\mathbb{P}(\tau_{u}\leq uz),\quad u,z>0.
\end{equation}

We first consider the case when the claim sizes have light-tails, i.e. there exists 
some $\theta>0$ so that
$\mathbb{E}[e^{\theta C_{1}}]<\infty$.

By the law of large numbers,
\begin{equation}
\lim_{t\rightarrow\infty}\frac{1}{t}\sum_{i=1}^{N_{t}}C_{i}
=m\mathbb{E}[C_{1}].
\end{equation}

By Theorem \ref{CLDPThm}, 
$\Gamma_{C}(\theta):=\lim_{t\rightarrow\infty}\frac{1}{t}\log
\mathbb{E}[e^{\theta\sum_{i=1}^{N_{t}}C_{i}}]$ exists.  To exclude the trivial case, we assume that
\begin{equation}\label{between}
m\mathbb{E}[C_{1}]<p<\frac{\Gamma_{C}(\theta_{c})}{\theta_{c}},
\end{equation}
where the critical value $\theta_{c}$ is defined as
\begin{equation}
\theta_{c}:=\sup\{\theta:\Gamma_{C}(\theta)<\infty\}.
\end{equation}
The first inequality in \eqref{between} is the usual net profit condition in ruin theory
and the second inequality in \eqref{between} guarantees that
the equation $\Gamma_{C}(\theta)=p\theta$ has a unique positive 
solution $\theta^{\dagger}<\theta_{c}$. 

To see this, let $G(\theta)=\Gamma_{C}(\theta)-p\theta$. 
Notice that $G(0)=0$, $G(\infty)=\infty$, and that $G$ is convex. 
We also have $G'(0)=m\mathbb{E}[C_{1}]-p<0$ 
and $\Gamma_{C}(\theta_{c})-\rho\theta_{c}>0$ by \eqref{between}. 
Therefore, there exists only one
solution $\theta^{\dagger}\in(0,\theta_{c})$
of $\Gamma_{C}(\theta^{\dagger})=p\theta^{\dagger}$.

\begin{theorem}[Infinite Horizon]\label{InfiniteHorizon}
Assume \eqref{between}, we have
$\lim_{u\rightarrow\infty}\frac{1}{u}\log\psi(u)=-\theta^{\dagger}$, 
where $\theta^{\dagger}\in(0,\theta_{c})$ 
is the unique positive solution of $\Gamma_{C}(\theta)=p\theta$.
\end{theorem}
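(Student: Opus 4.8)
Write $S_t:=\sum_{i=1}^{N_t}C_i$, so that $R_t=u+pt-S_t$ and $\tau_u<\infty$ exactly when $\sup_{t\ge 0}(S_t-pt)\ge u$; hence $\psi(u)=\mathbb{P}\bigl(\sup_{t\ge 0}(S_t-pt)\ge u\bigr)$. The plan is to establish matching logarithmic bounds $-\theta^{\dagger}$ from below and from above: the lower bound comes from the large deviation principle of Theorem \ref{CLDPThm} applied on a single well-chosen deterministic horizon, and the upper bound from slicing time into unit intervals and combining a union bound with a Chernoff estimate. Throughout, the strict inequality $\theta^{\dagger}<\theta_c$ furnished by \eqref{between} is used crucially, as is the fact that $\Gamma_C$ is convex and, by the steepness analysis in the proof of Theorem \ref{LDPThm} together with the chain rule, differentiable on the interior of its domain with $\Gamma_C'(\theta)\to\infty$ as $\theta\uparrow\theta_c$.

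For the lower bound, I would discard everything except the value of the surplus at one deterministic time: for every $z>0$, $\psi(u)\ge\mathbb{P}(R_{uz}\le 0)=\mathbb{P}\bigl(\tfrac{1}{uz}S_{uz}\ge\tfrac1z+p\bigr)$. Applying Theorem \ref{CLDPThm} along the scale $t=uz$ (the hypothesis $\mathbb{E}[e^{\theta C_1}]<\infty$ for all $\theta$ holds in the present setting), and using that the convex rate function $I_C$ is nondecreasing to the right of its minimizer $m\mathbb{E}[C_1]<p$, one obtains $\liminf_{u\to\infty}\tfrac1u\log\psi(u)\ge -z\,I_C\bigl(\tfrac1z+p\bigr)$ for every $z>0$. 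It then remains to verify the variational identity $\inf_{z>0}z\,I_C(\tfrac1z+p)=\theta^{\dagger}$, equivalently $\inf_{x>p}\tfrac{I_C(x)}{x-p}=\theta^{\dagger}$. Writing $\phi(x):=I_C'(x)(x-p)-I_C(x)$, the stationarity condition $\phi(x^{*})=0$ together with the Legendre relations $I_C'(x^{*})=:\theta^{*}$, $x^{*}=\Gamma_C'(\theta^{*})$, $I_C(x^{*})=\theta^{*}x^{*}-\Gamma_C(\theta^{*})$ reduces to $\Gamma_C(\theta^{*})=p\theta^{*}$, so $\theta^{*}=\theta^{\dagger}$ by uniqueness; moreover $\phi(x)=\Gamma_C(I_C'(x))-p\,I_C'(x)=G(I_C'(x))$ with $G:=\Gamma_C-p\,\mathrm{id}$, which is negative on $(0,\theta^{\dagger})$ and positive on $(\theta^{\dagger},\theta_c)$, so $x^{*}=\Gamma_C'(\theta^{\dagger})$ is the global minimizer and the optimal value is $\theta^{\dagger}$. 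Hence $\liminf_{u\to\infty}\tfrac1u\log\psi(u)\ge-\theta^{\dagger}$.

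For the upper bound, since $S$ is nondecreasing one has $S_t-pt\le S_{n+1}-pn$ on $[n,n+1)$, whence $\psi(u)\le\sum_{m\ge 1}\mathbb{P}\bigl(S_m\ge u+p(m-1)\bigr)$. Fix $\theta\in(0,\theta^{\dagger})$. Because $\theta<\theta_c$, the representation $\mathbb{E}[e^{\theta S_m}]=\mathbb{E}[e^{(\log\mathbb{E}[e^{\theta C_1}])N_m}]$ from the proof of Theorem \ref{CLDPThm}, combined with the finiteness of $f_{\infty}$ below $\theta_c$ established in the proof of Theorem \ref{LogLimit}, shows $\mathbb{E}[e^{\theta S_m}]<\infty$ for every $m$, while Theorem \ref{CLDPThm} gives $\tfrac1m\log\mathbb{E}[e^{\theta S_m}]\to\Gamma_C(\theta)$. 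Since $G$ is convex and vanishes at $0$ and at $\theta^{\dagger}$, $\Gamma_C(\theta)-p\theta<0$ on $(0,\theta^{\dagger})$; choose $\epsilon>0$ with $\Gamma_C(\theta)-p\theta+\epsilon<0$. By Chernoff's inequality $\mathbb{P}(S_m\ge u+p(m-1))\le e^{-\theta u}e^{\theta p}e^{-\theta pm}\mathbb{E}[e^{\theta S_m}]$, so the tail $\sum_{m\ge m_0}$ is dominated by $e^{-\theta u}e^{\theta p}\sum_{m\ge m_0}e^{m(\Gamma_C(\theta)-p\theta+\epsilon)}$, a convergent geometric series, while the finitely many terms $m<m_0$ contribute at most $e^{-\theta u}\sum_{m<m_0}\mathbb{E}[e^{\theta S_m}]$. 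Thus $\psi(u)\le C_{\theta}e^{-\theta u}$, giving $\limsup_{u\to\infty}\tfrac1u\log\psi(u)\le-\theta$, and letting $\theta\uparrow\theta^{\dagger}$ completes the argument.

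The main obstacle is the bookkeeping around the critical exponent rather than any single estimate: on one hand, one must pin down the variational identity $\inf_{z>0}z\,I_C(\tfrac1z+p)=\theta^{\dagger}$, which rests entirely on the convex-conjugate relations and on the differentiability and steepness of $\Gamma_C$ inherited from Theorems \ref{LogLimit} and \ref{LDPThm}; on the other hand, one must be able to push the Chernoff bound in the upper half all the way up to $\theta^{\dagger}$, and this is precisely where the strict inequality $\theta^{\dagger}<\theta_c$ is indispensable, since it guarantees both that every $\mathbb{E}[e^{\theta S_m}]$ is finite for $\theta$ just below $\theta^{\dagger}$ and that the geometric series in the union bound converges.
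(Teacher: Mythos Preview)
Your argument is correct, but it follows a different route from the paper's. The paper does not argue directly: it invokes the Glynn--Whitt theorem on logarithmic tail asymptotics for $\sup_n S_n$, checks its four hypotheses (finiteness of the cumulant generating function in a neighbourhood of $\theta^{\dagger}$, finiteness of increment moments, existence of the limit $\kappa(\theta)=\Gamma_C(\theta)-p\theta$, and the root-and-derivative condition at $\theta^{\dagger}$) for the discretized sequence $\{S_{nh}\}_{n\in\mathbb N}$, and then passes to continuous time via the sandwich $\sup_{t}S_t\ge\sup_n S_{nh}\ge\sup_t S_t-ph$. Your proof is a self-contained unpacking of essentially the same mechanism: your upper bound is the Chernoff/union-bound computation that underlies the Glynn--Whitt upper bound (with the same discretization step), while your lower bound replaces the exponential change of measure by a direct appeal to the large deviation lower bound at a single horizon $t=uz$, followed by the Legendre calculation $\inf_{z>0}zI_C(1/z+p)=\theta^{\dagger}$. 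One small notational point: the $\theta_c$ you invoke for the finiteness of $f_\infty$ in Theorem~\ref{LogLimit} is the critical value for $\Gamma$, not for $\Gamma_C$; the link is that $\theta<\theta_c$ for $\Gamma_C$ forces $\log\mathbb E[e^{\theta C_1}]$ below the critical value for $\Gamma$, which is what you actually use. What your approach buys is that the variational identity you prove is exactly the content of the finite-horizon Theorem~\ref{FiniteHorizon}, so you get both results in one stroke; what the paper's approach buys is brevity, since the Glynn--Whitt conditions are immediate from Theorem~\ref{CLDPThm}.
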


\begin{proof}
Let us first quote a result from Glynn and Whitt \cite{Glynn}.
Let $S_{n}$ be random variables and $\tau_{u}=\inf\{n: S_{n}>u\}$ 
and $\psi(u)=\mathbb{P}(\tau_{u}<\infty)$. 
Assume that there exist some $\gamma,\epsilon>0$ so that

(i) $\kappa_{n}(\theta)=\log\mathbb{E}[e^{\theta S_{n}}]$ 
is well defined and finite for $\gamma-\epsilon<\theta<\gamma+\epsilon$.

(ii) $\limsup_{n\rightarrow\infty}\mathbb{E}[e^{\theta(S_{n}-S_{n-1})}]<\infty$ 
for $-\epsilon<\theta<\epsilon$.

(iii) $\kappa(\theta)=\lim_{n\rightarrow\infty}\frac{1}{n}\kappa_{n}(\theta)$ 
exists and is finite for $\gamma-\epsilon<\theta<\gamma+\epsilon$.

(iv) $\kappa(\gamma)=0$ and $\kappa$ is differentiable at $\gamma$ with $0<\kappa'(\gamma)<\infty$.

Then, Glynn and Whitt \cite{Glynn} showed that 
$\lim_{u\rightarrow\infty}\frac{1}{u}\log\psi(u)=-\gamma$.

Take $S_{t}=\sum_{i=1}^{N_{t}}C_{i}-pt$ and 
$\kappa_{t}(\theta)=\log\mathbb{E}[e^{\theta S_{t}}]$. 
By Theorem \ref{CLDPThm}, we have 
$\lim_{t\rightarrow\infty}\frac{1}{t}\kappa_{t}(\theta)=\Gamma_{C}(\theta)-p\theta$. 
Consider $\{S_{nh}\}_{n\in\mathbb{N}}$. 
We have $\lim_{n\rightarrow\infty}\frac{1}{n}\kappa_{nh}(\theta)=h\Gamma_{C}(\theta)-hp\theta$. 
By checking the conditions (i)-(iv), we get 
\begin{equation}
\lim_{u\rightarrow\infty}\frac{1}{u}\log\mathbb{P}\left(\sup_{n\in\mathbb{N}}S_{nh}>u\right)
=-\theta^{\dagger}.
\end{equation}
Finally, notice that
\begin{equation}
\sup_{t\in\mathbb{R}^{+}}S_{t}\geq\sup_{n\in\mathbb{N}}S_{nh}\geq\sup_{t\in\mathbb{R}^{+}}S_{t}-ph.
\end{equation}
Hence, $\lim_{u\rightarrow\infty}\frac{1}{u}\log\psi(u)=-\theta^{\dagger}$.
\end{proof}

\begin{theorem}[Finite Horizon]\label{FiniteHorizon}
Under the same assumptions as in Theorem \ref{InfiniteHorizon}, we have
\begin{equation}
\lim_{u\rightarrow\infty}\frac{1}{u}\log\psi(u,uz)=-w(z),\quad\text{for any $z>0$},
\end{equation}
where
\begin{equation}
w(z)=
\begin{cases}
zI_{C}\left(\frac{1}{z}+p\right) &\text{if $0<z<\frac{1}{\Gamma'_{C}(\theta^{\dagger})-p}$}
\\
\theta^{\dagger} &\text{if $z\geq\frac{1}{\Gamma'_{C}(\theta^{\dagger})-p}$}
\end{cases}.
\end{equation}
\end{theorem}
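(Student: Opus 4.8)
The plan is to run the standard finite-horizon ruin argument (in the spirit of Glynn and Whitt \cite{Glynn}, Stabile and Torrisi \cite{Stabile}, and Zhu \cite{ZhuRuin}), with the large deviation input supplied by Theorem \ref{CLDPThm}. Set $S_t:=\sum_{i=1}^{N_t}C_i-pt$; since $S$ has only upward jumps and a constant downward drift, $\{\tau_u\le uz\}=\{\sup_{0\le t\le uz}S_t\ge u\}$, so $\psi(u,uz)=\mathbb{P}(\sup_{0\le t\le uz}S_t\ge u)$. It is convenient to write $G(\theta):=\Gamma_C(\theta)-p\theta$; by Theorem \ref{CLDPThm}, $\frac{1}{t}\log\mathbb{E}[e^{\theta S_t}]\to G(\theta)$ as $t\to\infty$ for every $\theta<\theta_c$, and $G$ is convex with $G(0)=0$, $G'(0)=m\mathbb{E}[C_1]-p<0$, $G(\theta^{\dagger})=0$ and $G'(\theta^{\dagger})=\Gamma_C'(\theta^{\dagger})-p>0$.

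For the lower bound, I would discard all but a single time point: for fixed $s\in(0,z]$, since $su\le uz$,
\begin{equation}
\psi(u,uz)\ge\mathbb{P}(S_{su}\ge u)=\mathbb{P}\!\left(\frac{1}{su}\sum_{i=1}^{N_{su}}C_i\ge\frac{1}{s}+p\right).
\end{equation}
Because $\tfrac{1}{s}+p>p>m\mathbb{E}[C_1]$ and $I_C$ is continuous and nondecreasing to the right of its unique zero $m\mathbb{E}[C_1]$, the large deviation lower bound of Theorem \ref{CLDPThm} with time horizon $su\to\infty$ gives $\liminf_{u\to\infty}\frac{1}{u}\log\psi(u,uz)\ge-sI_C(\tfrac{1}{s}+p)$, and optimizing over $s$ yields $\liminf_{u\to\infty}\frac{1}{u}\log\psi(u,uz)\ge-\inf_{0<s\le z}sI_C(\tfrac{1}{s}+p)$.

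For the upper bound, since $S$ jumps only upward and decreases at rate $p$ between jumps, $S_t\le S_{\lceil t\rceil}+p$ for every $t$, hence $\sup_{0\le t\le uz}S_t\le p+\max_{1\le k\le\lceil uz\rceil}S_k$ and $\psi(u,uz)\le\sum_{k=1}^{\lceil uz\rceil}\mathbb{P}(S_k\ge u-p)$, which costs only a polynomial factor. Fixing $\theta\in(0,\theta_c)$ and $\epsilon>0$, I would derive from $\frac{1}{k}\log\mathbb{E}[e^{\theta S_k}]\to G(\theta)$ together with the monotonicity of $\mathbb{E}[e^{\theta\sum_{i=1}^{N_k}C_i}]$ in $k$ (which bounds the finitely many small-$k$ terms by a constant $B_\epsilon$) the uniform estimate $\mathbb{E}[e^{\theta S_k}]\le\max\{B_\epsilon,e^{k(G(\theta)+\epsilon)}\}$ for all $k\ge1$. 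Applying $\mathbb{P}(S_k\ge u-p)\le e^{-\theta(u-p)}\mathbb{E}[e^{\theta S_k}]$ and summing the geometric series, two regimes appear: when $G(\theta)\le0$ the sum is bounded by a polynomial in $u$, yielding $\limsup_{u\to\infty}\frac{1}{u}\log\psi(u,uz)\le-\theta$; when $G(\theta)>0$ the sum is of order $e^{(uz+O(1))(G(\theta)+\epsilon)}$, yielding $\limsup_{u\to\infty}\frac{1}{u}\log\psi(u,uz)\le-\theta+z(G(\theta)+\epsilon)$. Letting $\epsilon\downarrow0$ and taking the best $\theta$,
\begin{equation}
\limsup_{u\to\infty}\frac{1}{u}\log\psi(u,uz)\le-\sup_{0<\theta<\theta_c}\{\theta-z(G(\theta))^{+}\}.
\end{equation}

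The last step is to identify the variational constants with the piecewise formula for $w(z)$. Since $sI_C(\tfrac{1}{s}+p)=\sup_{\theta}\{\theta-sG(\theta)\}$, the lower-bound quantity $\inf_{0<s\le z}sI_C(\tfrac1s+p)$ and the upper-bound quantity $\sup_{0<\theta<\theta_c}\{\theta-z(G(\theta))^{+}\}$ are Legendre dual and equal, so the limit exists. To evaluate $w(z):=\sup_{\theta}\{\theta-z(G(\theta))^{+}\}$, note that on $(0,\theta^{\dagger}]$ the bracket equals $\theta$ while on $[\theta^{\dagger},\theta_c)$ it equals $\theta-zG(\theta)$, a concave function whose derivative at $\theta^{\dagger}$ is $1-z(\Gamma_C'(\theta^{\dagger})-p)$; hence the maximum is attained at $\theta^{\dagger}$ with value $\theta^{\dagger}$ when $z\ge(\Gamma_C'(\theta^{\dagger})-p)^{-1}$, and at the interior point $\theta_z$ solving $\Gamma_C'(\theta_z)=\tfrac1z+p$ otherwise, where $\theta_z-zG(\theta_z)=z[\theta_z(\tfrac1z+p)-\Gamma_C(\theta_z)]=zI_C(\tfrac1z+p)$ — exactly the asserted $w(z)$. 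I expect the main obstacle to be the upper bound: obtaining the exponential-moment estimate for $S_k$ uniformly in $k$, and cleanly reducing the continuous-time running maximum to a discrete one; by contrast the lower bound follows immediately from Theorem \ref{CLDPThm}, and the variational identification is routine convex analysis.
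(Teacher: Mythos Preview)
Your proposal is correct and follows exactly the approach the paper points to: the paper's own proof is simply ``The proof is similar to that in Stabile and Torrisi \cite{Stabile} and we omit it here,'' and what you have written is precisely that Stabile--Torrisi/Glynn--Whitt argument (single-time lower bound via the LDP of Theorem~\ref{CLDPThm}, Chernoff upper bound after discretizing the running supremum, and Legendre identification of the two variational expressions with the piecewise $w(z)$). There is nothing to add.
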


\begin{proof}
The proof is similar to that in Stabile and Torrisi \cite{Stabile} and we omit it here.
\end{proof}

Next, we are interested to study the case when the claim sizes have heavy tails, 
i.e. $\mathbb{E}[e^{\theta C_{1}}]=+\infty$ for any $\theta>0$.

A distribution function $B$ is subexponential, i.e. $B\in\mathcal{S}$ if 
\begin{equation}
\lim_{x\rightarrow\infty}\frac{\mathbb{P}(C_{1}+C_{2}>x)}{\mathbb{P}(C_{1}>x)}=2,
\end{equation}
where $C_{1}$, $C_{2}$ are i.i.d. random variables with distribution function $B$. 
Let us denote $B(x):=\mathbb{P}(C_{1}\geq x)$
and let us assume that $\mathbb{E}[C_{1}]<\infty$ and define 
$B_{0}(x):=\frac{1}{\mathbb{E}[C]}\int_{0}^{x}\overline{B}(y)dy$,
where $\overline{F}(x)=1-F(x)$ is the complement of any distribution function $F(x)$.
The examples and properties of subexponential distributions can be found in the book by Asmussen and Albrecher \cite{Asmussen}.

Goldie and Resnick \cite{Goldie} showed that if $B\in\mathcal{S}$ and satisfies some smoothness
conditions, then $B$ belongs to the maximum domain of attraction of either the Frechet distribution
or the Gumbel distribution. In the former case, $\overline{B}$ is regularly varying,
i.e. $\overline{B}(x)=L(x)/x^{\alpha+1}$, for some $\alpha>0$ and we write
it as $\overline{B}\in\mathcal{R}(-\alpha-1)$, $\alpha>0$.

We assume that $B_{0}\in\mathcal{S}$ and either $\overline{B}\in\mathcal{R}(-\alpha-1)$ 
or $B\in\mathcal{G}$, i.e.
the maximum domain of attraction of Gumbel distribution. $\mathcal{G}$ 
includes Weibull and lognormal distributions.

When the arrival process $N_{t}$ satisfies a large deviation result, 
the probability that it deviates away
from its mean is exponentially small, which is dominated by subexonential distributions. 
The results in Zhu \cite{ZhuRuin}
for the asymptotics of ruin probabilities for risk processes with non-stationary, 
non-renewal arrivals and subexponential claims can be applied in the context of Hawkes arrivals
with different exciting functions.
We have the following infinite-horizon and finite-horizon ruin probability estimates 
when the claim sizes are subexponential.

\begin{theorem}
Assume the net profit condition $p>m\mathbb{E}[C_{1}]$.

(i) (Infinite-Horizon)
\begin{equation}
\lim_{u\rightarrow\infty}\frac{\psi(u)}{\overline{B}_{0}(u)}
=\frac{m\mathbb{E}[C_{1}]}{p-m\mathbb{E}[C_{1}]}.
\end{equation}

(ii) (Finite-Horizon) For any $T>0$,
\begin{equation}
\lim_{u\rightarrow\infty}\frac{\psi(u,uz)}{\overline{B}_{0}(u)}
=
\begin{cases}
\frac{m\mathbb{E}[C_{1}]}{p-m\mathbb{E}[C_{1}]}
\left[1-\left(1+\left(1-\frac{m\mathbb{E}[C_{1}]}{p}\right)
\frac{T}{\alpha}\right)^{-\alpha}\right]
&\text{if $\overline{B}\in\mathcal{R}(-\alpha-1)$}
\\
\frac{m\mathbb{E}[C_{1}]}{p-m\mathbb{E}[C_{1}]}
\left[1-e^{-(1-\frac{m\mathbb{E}[C_{1}]}{p})T}\right]&\text{if $B\in\mathcal{G}$}
\end{cases}.
\end{equation}
\end{theorem}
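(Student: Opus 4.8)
The plan is to deduce both parts from the general subexponential ruin asymptotics of Zhu \cite{ZhuRuin} for risk processes driven by non-stationary, non-renewal claim arrivals, after verifying that the Hawkes process $N_t$ with different exciting functions meets the hypotheses required there. Those hypotheses reduce to three points: (a) a strong law of large numbers $N_t/t\to m$ almost surely with $m\in(0,\infty)$; (b) the asymptotic mean condition $\mathbb{E}[N_t]/t\to m$; and (c) an upper bound on the upper-tail large deviations of $N_t/t$ that is fast enough --- here exponential --- so that the event that the arrivals outrun their mean is negligible compared with $\overline{B}_0(u)$ as $u\to\infty$. The claim-size hypotheses, namely $B_0\in\mathcal{S}$ together with $\overline{B}\in\mathcal{R}(-\alpha-1)$ or $B\in\mathcal{G}$, are precisely the standing assumptions of the theorem, and the net profit condition $p>m\mathbb{E}[C_1]$ supplies the positive drift $p-m\mathbb{E}[C_1]>0$ that all the formulas feature.

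First I would record (a), which is the law of large numbers \eqref{LLN} of Fierro et al. \cite{Fierro} under Assumption \ref{MainAssumption}, with limit $m=\sum_{n=0}^{\infty}m_n$. For (b) I would start from the exact identity $\mathbb{E}[N_t]=\int_0^t\sum_{n=0}^{\infty}(\gamma_0\ast\cdots\ast\gamma_n)(s)\,ds$ of Fierro et al. \cite{Fierro}; using $\Vert\gamma_0\ast\cdots\ast\gamma_n\Vert_{L^1}\le\Vert\gamma_0\Vert_{L^1}\rho^n$ with $\rho<1$, dominated convergence gives $\frac1t\int_0^t(\gamma_0\ast\cdots\ast\gamma_n)(s)\,ds\to m_n$ for each $n$ and hence $\mathbb{E}[N_t]/t\to m$. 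For (c) I would invoke Theorem \ref{LDPThm}: $\mathbb{P}(N_t/t\in\cdot)$ obeys a large deviation principle with rate function $I(x)=\sup_{\theta}\{\theta x-\Gamma(\theta)\}$, and since $\Gamma$ is finite and differentiable near the origin with $\Gamma'(0)=m$ and $\Gamma$ convex, $I$ vanishes only at $m$ and $I(x)>0$ for $x>m$; therefore for every $\delta>0$ there is $c(\delta)>0$ with $\mathbb{P}(N_t\ge(m+\delta)t)\le e^{-c(\delta)t}$ for all large $t$, and this exponential decay is negligible compared with any subexponential tail on time scales linear in $u$, which is exactly what is needed to rule out the ``fast arrivals / many small claims'' scenario in Zhu's framework.

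With (a)--(c) verified, both conclusions follow by citing the corresponding theorems of Zhu \cite{ZhuRuin}, whose ``mean arrival rate'' parameter specializes to $m$ under our verifications. Heuristically, ruin before the horizon is, to leading order, caused by a single claim of size comparable to $u$ occurring while the aggregate claim process tracks its fluid path $s\mapsto m\mathbb{E}[C_1]s$; counting such claims against the drift $p-m\mathbb{E}[C_1]$ produces the constant $\frac{m\mathbb{E}[C_1]}{p-m\mathbb{E}[C_1]}$ in the infinite-horizon case, while in the finite-horizon case one integrates the location of the big jump over the interval $[0,\text{horizon}]$, and the $\mathcal{R}(-\alpha-1)$ versus $\mathcal{G}$ dichotomy enters through how $\overline{B}_0(u(1-y))$ compares to $\overline{B}_0(u)$ for $y\in(0,1)$ (polynomial versus essentially exponential). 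I expect the main obstacle to be purely bookkeeping: matching the precise regularity conditions imposed in Zhu \cite{ZhuRuin} --- in particular deciding whether a uniform-in-horizon law of large numbers is required or the pointwise one suffices, and confirming that the exponential bound in (c) holds with whatever uniformity Zhu's argument uses --- rather than proving any new estimate tied to the Hawkes structure.
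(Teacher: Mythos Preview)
Your proposal is correct and matches the paper's approach: the paper does not give a detailed proof of this theorem but simply observes in the preceding paragraph that the large deviation principle (Theorem~\ref{LDPThm}) makes the deviation probabilities exponentially small, and hence the general results of Zhu~\cite{ZhuRuin} apply directly. Your write-up is in fact more careful than the paper's, spelling out the verification of the law of large numbers, the mean asymptotics, and the exponential tail bound that feed into Zhu's hypotheses.
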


\section*{Acknowledgements}

The authors are extremely grateful to the editor and the referees for a very careful reading of the manuscript
and also for the very helpful suggestions.

\end{document}